\title{Exploration of a Cosine Expansion Lattice Scheme}
\numberwithin{equation}{section}
\newtheorem{theorem}{Theorem}[section]
\newtheorem{proposition}[theorem]{Proposition}
\theoremstyle{definition}\newtheorem{definition}[theorem]{Definition}
\theoremstyle{definition}
\theoremstyle{remark}\newtheorem{remark}[theorem]{Remark}
\author[1]{Ki Wai Chau}
\author[1,2]{Cornelis W. Oosterlee}
\affil[1]{Delft Institute of Applied Mathematics, Delft University of Technology}
\affil[2]{Research Group of Scientific Computing, Centrum Wiskunde \& Informatica}
\begin{document}
\maketitle

\begin{abstract}
	In this article, we combine a lattice sequence from Quasi-Monte Carlo rules with the philosophy of the Fourier-cosine method to design an approximation scheme for expectation computation.
	We study the error of this scheme and compare this scheme with our previous work on wavelets.
	Also, some numerical experiments are performed.
\end{abstract}

\section{Introduction}

In this work, we derive a numerical integration formula for a function $\mathfrak{f}: \mathbb{R}^s \rightarrow \mathbb{R}$ with respect to a probability measure $\mu$. 
Namely, we aim to approximate the following quantity:
\begin{equation}
	\mathbb{E}[\mathfrak{f}({\bf Y})] = \int_{\mathbb{R}^s} \mathfrak{f}({\bf y})\mu(d {\bf y}). \label{equation_expectation}
\end{equation}
The inspiration of this work comes from two promising methods in numerical integration, the COS method and lattice rules. 

The COS method is a numerical integration method developed in the context of option pricing, see \cite{fang_2009_novel}. 
Within this framework, the fair value of a financial option can be expressed as the expectation of the discounted payoff function.
Using the fact that the payoff function can be expressed as a cosine series, an approximation of the integration in terms of the Fourier transform of the risk neutral measure and the cosine coefficient of the payoff function were presented.
This method is efficient in terms of calculation with only simple addition operations.
It also makes use of the characteristic function of a probability measure, which is available more often than the density function. 
Ever since, there has been numerous extensions of the COS method, including pricing Bermudan options \cite{fang_2009_pricing}, finding ruin probability \cite{chau_2015_ruin}, solving backward stochastic differential equations \cite{ruijter_2015_fourier} and computation of valuation adjustment \cite{borovykh_2018_efficient}.

There are also several applications for which it is necessary to extend the COS method beyond the one-dimensional situation, for example, in \cite{ruijter_2012_two}.
However, the tensor extension used in previous studies suffers from the curse of dimensionality, with the number of summation terms increasing exponentially when the number of dimensions increases. 
Therefore, further input is required for such extensions to be feasible in practice. 

Thus, we took the insight from Quasi-Monte Carlo (QMC) rules, which approximate integrals of the form $$\int_{[0,1]^s}\mathfrak{f}({\bf y})d{\bf y}$$ with equal weight quadrature rules $$\frac{1}{N}\sum^{N-1}_{n=0}\mathfrak{f}({\bf y}_n).$$
Readers are referred to \cite{dick_2010_digital}, \cite{niedereriter_1992_random} and references therein for further details. 

In particular, we are interested in the rank 1 lattice rule construction method for quadrature points, where for a given positive number $N$ under dimension $s$, one chooses a vector ${\bf g} \in \{1, \ldots, N-1\}^s$ called the generating vector and generates a points set:  $$\left(\left\{\frac{n{\bf g}}{N}\right\}\right)_{0 \leq n < N-1},$$ where the notation $\{\}$ denotes the fractional part of the real number in each dimension.
The quadrature points are the result of applying some fixed function on this points set. 

There have been numerous research papers supporting the implementation of lattice rule QMC, in identifying a suitable generating vector \cite{dick_2004_on, kuo_2002_component, sloan_2002_component,  sloan_2002_on}, in efficient algorithms to generate such vectors \cite{nuyens_2006_fast, nuyens_2006_fast_non_prime}, and in extensible lattice rules \cite{cools_2006_constructing, dick_2008_construction, hickernell_2012_weighted, hickernell_2003_existence}.
In \cite{dick_2014_lattice}, the authors derived an error bound for QMC with tent transformed lattice rules for functions within the half-period cosine space. 
Noting the connection between the half-period cosine space and the tensor extension of the cosine series, we are inspired to combine the two approaches and transfer the rich results from the lattice literature to Fourier expansion schemes. 

This work is organized as follows. 
In Section \ref{section_scheme}, we present the two components of the cosine expansion lattice scheme, the half-period cosine expansion and the tent transformed lattice. 
We also present the full scheme in this section. 
In Section \ref{section_discussion}, we give further details regarding our current results by providing an alternative formation of its error bound and connecting the scheme to periodic wavelets. 
Numerical experiments are presented in Section \ref{section_numerical_experiment} and we conclude our findings in Section \ref{section_conclusion}.

Before we begin, we mention some conventions used in this work.
We assume all the integrals in the computation to be finite, therefore Fubini's theorem can be applied and we exchange the order of integration without notice. 
The operations $\times$ and $/$ act component-wise when used on vectors. 

Finally, some notation we use in this article:
\begin{itemize}
	\item The natural number set $\mathbb{N} := \{0, 1, 2, \ldots\}$;
	\item The positive integer set $\mathbb{Z}^+ := \{i \in \mathbb{Z}|i>0\}$, similarly for $\mathbb{R}^+$;
	\item The truncated integer set, for $s \in \mathbb{Z}^+$ we write $[s] := \{1, \ldots, s\}$;
	\item The indicator function ${\bf 1}_{\mathcal{D}}: \mathbb{R}^s \rightarrow \{0, 1\}$
		\begin{equation*}
			{\bf 1}_\mathcal{D} ({\bf y}) = 
			\left\{
				\begin{array}{ll}
					1, & \mbox{ if } {\bf y} \in \mathcal{D};\\
					0, & \mbox{ otherwise;}
				\end{array}
			\right.
		\end{equation*}
	\item The ceil function $\lceil \cdot \rceil : \mathbb{R} \rightarrow \mathbb{Z}$, $\lceil y \rceil = \min\{i \in \mathbb{Z}| i\geq y\}$.
\end{itemize}

\section{Cosine Expansion Lattice Scheme} \label{section_scheme}

In this section, we introduce the cosine expansion lattice scheme, whose construction  consists of two parts: a projection of the original integrand on a reproducing kernel space and a numerical integration technique based on a tent-transformed lattice rule.
We will briefly describe the intuition behind our derivation. 

\subsection{The half-period cosine space}

In a similar framework as the COS method from  \cite{fang_2009_novel}, we would like to define a cosine based periodic expansion of function $\mathfrak{f}$ in $\mathbb{R}^s$.
This allows us to connect the expectation problem \eqref{equation_expectation} to a Fourier transform. 

The common practice to transform Equation \eqref{equation_expectation} into a finite problem for computational purposes is restricting the domain of integration to a predefined box $\mathcal{D} := [a_1, b_1] \times \cdots \times [a_s, b_s]$.
This step is justified as long as $\mathcal{D}$ contains the majority of the mass of the measure.
In fact, this is equivalent to replacing the original integrand $\mathfrak{f}({\bf y})$ by $\mathfrak{f}({\bf y}){\bf 1}_{\mathcal{D}}({\bf y})$, or setting the function values outside $\mathcal{D}$ to zero. 

The COS method uses this concept to its advantage by replacing an originally non-periodic one-dimensional function $\mathfrak{f}$ by a cosine series projection that coincides with $\mathfrak{f}$ on the domain $[a, b]$, but is periodic throughout the whole real line.
 
To be precise, the integrand $\mathfrak{f}$ is replaced by\footnote{The notation $\sum{}'$ denotes the first term of the summation is weighted by one half.} $$\bar{\mathfrak{f}}(y) := \sum^{N-1}_{k = 1} {}'\; \widetilde{\mathfrak{f}}_{\cos}(k) \cos \left(k \pi \frac{y-a}{b-a}\right),$$ where $$\widetilde{\mathfrak{f}}_{\cos}(k) :=  \frac{2}{b-a}\int ^b_a \mathfrak{f}(y) \cos \left(k \pi \frac{y-a}{b-a}\right)dx.$$
In this case, the original 1-D expectation is approximated by 
\begin{align*}
	\mathbb{E}[\mathfrak{f}(Y)] 
	& \approx 
	\int_\mathbb{R} \sum^{N-1}_{k = 1} {}'\; \widetilde{\mathfrak{f}}_{\cos}(k) \cos \left(k \pi \frac{y-a}{b-a}\right) \mu(dy) \\
	& = 
	\sum^{N-1}_{k = 1} {}'\; \widetilde{\mathfrak{f}}_{\cos}(k) \int_\mathbb{R} \cos \left(k \pi \frac{y-a}{b-a}\right) \mu(dy) \\
	& = 
	\sum^{N-1}_{k = 1} {}' \;\widetilde{\mathfrak{f}}_{\cos}(k) \Re \left\{\mathcal{F}\left(\frac{k\pi}{b-a}\right)\exp\left(-\imath k \pi\frac{ a}{b-a}\right)\right\}.
\end{align*}
Note that within $[a, b]$, $\bar{\mathfrak{f}}$ is a projection of $\mathfrak{f}$ onto a finite cosine series space and $\bar{\mathfrak{f}}$ converges to $\mathfrak{f}$ in the $L^2([a, b])$ norm, when $N$ tends to infinity. 
However, when considering the function $\bar{\mathfrak{f}}$ on the whole domain $\mathbb{R}$, it is a periodic function and it thus deviates from $\mathfrak{f}$ itself. 
Once again, the error of this transformation is kept under control as the probability mass outside $[a, b]$ is small and we make use of the Fourier transform $\mathcal{F}$, which is typically available, in the approximation. 

The main goal of this work is to apply the same idea in a higher-dimensional setting. 
Thus, we will consider a functional space built on cosine functions and use the series projection of $\mathfrak{f}$ onto such space as our replacement integrand. 
The space we pick is a modification to the half-period cosine space introduced in Section 2.3 of \cite{dick_2014_lattice} and their article serves as an inspiration of the current work.

We define an inner product $<\cdot, \cdot>_{\mathcal{K}^{\cos}_{\alpha, {\boldsymbol{\gamma}}, s}(\mathcal{D})}$ as $$<\mathfrak{f}, \mathfrak{g}>_{\mathcal{K}^{\cos}_{\alpha, {\boldsymbol{\gamma}}, s}(\mathcal{D})} = \sum_{{\bf k} \in \mathbb{N}^s}\widetilde{\mathfrak{f}}_{\cos}({\bf k})\widetilde{\mathfrak{g}}_{\cos}({\bf k})r^{-1}_{\alpha, \boldsymbol{\gamma}, s}({\bf k}),$$ for some real number $\alpha > 1/2$ and vector $\boldsymbol{\gamma} \in (\mathbb{R}^+)^s$, where the multi-dimensional cosine coefficients  are given by $$\int_{[0, 1]^s}\mathfrak{f}({\bf y}\times({\bf b-a}) + {\bf a}) 2^{|{\bf k}|_0/2}\prod^s_{j=1}\cos (\pi k_j y_j) d{\bf y}, \; \mbox{for } {\bf k} = (k_1, \cdots, k_s) \in \mathbb{N}^s.$$
Here we define $|{\bf k}|_0 := \# \{j \in [s] : k_j \neq 0\}$ to be the number of non-zero components in ${\bf k }$. 
Note that only the portion of $\mathfrak{f}$ within the predefined domain $\mathcal{D}$ is used here. 

For $\alpha > 1/2, k \in \mathbb{Z}$ and $\gamma > 0$, the one-dimensional $r$ function is defined as:
\begin{equation*}
	r_{\alpha, \gamma}(k):= \left\{
	\begin{array}{ll}
	1 & \mbox{if } k=0;\\
	\gamma |k|^{-2\alpha} & \mbox{if } k \neq 0,
	\end{array}\right.
\end{equation*}
and the multidimensional $r$ function is set as, 
\begin{equation*}
	r_{\alpha, \boldsymbol{\gamma}, s}({\bf k}):=\prod^s_{j=1}r_{\alpha, \gamma_j}(k_j),
\end{equation*}
for ${\bf k} = (k_1, \ldots, k_s)\in \mathbb{Z}^s$ and ${\boldsymbol \gamma} = (\gamma_1, \ldots, \gamma_s)\in (\mathbb{R}^+)^s$.
The $r$ function is introduced to the norm here to assess the decay rate of the cosine coefficients, as it is closely related to the approximation error. 

We define the corresponding norm as $\sqrt{<\mathfrak{f}, \mathfrak{f}>_{\mathcal{K}^{\cos}_{\alpha, {\boldsymbol{\gamma}}, s}(\mathcal{D})}}$ and denote it by $||\mathfrak{f}||_{\mathcal{K}^{\cos}_{\alpha, {\boldsymbol{\gamma}}, s}(\mathcal{D})}$.
In particular, we have $$||\mathfrak{f}||_{\mathcal{K}^{\cos}_{\alpha, {\boldsymbol{\gamma}}, s}(\mathcal{D})}^2 = \sum_{{\bf k}\in \mathbb{N}^s}\frac{|\widetilde{\mathfrak{f}}_{\cos}({\bf k})|^2}{r_{\alpha, {\bf \gamma}, s}({\bf k})} = \sum_{{\bf h}\in \mathbb{Z}^s}2^{-|{\bf h}|_0}\frac{|\widetilde{\mathfrak{f}}_{\cos}({\bf h})|^2}{r_{\alpha, {\bf \gamma}, s}({\bf h})}.$$ 
The dummy variable is changed from ${\bf k}$ to ${\bf h}$ here in preparation for future computations. 
We denote any function $\mathfrak{f}$ such that $||\mathfrak{f}||_{\mathcal{K}^{\cos}_{\alpha, {\boldsymbol{\gamma}}, s}(\mathcal{D})} < \infty$ as $\mathfrak{f} \in \mathcal{H}_{\mathcal{K}^{\cos}_{\alpha, {\boldsymbol{\gamma}}, s}(\mathcal{D})}$.

The half-period cosine space is an example of a reproducing kernel Hilbert space with the corresponding reproducing kernel,
\begin{align*}
	\mathcal{K}^{\cos}_{\alpha, \boldsymbol{\gamma}, s}(\mathcal{D}, {\bf x}, {\bf y}) 
	:= & 
	\prod^s_{j=1}  \left(\sum _{k_j \in \mathbb{Z}} r_{\alpha, \gamma_j}(k_j)\cos \left(\pi k_j \frac{x_j- a_j}{b_j - a_j}\right) e^{\imath \pi k_j \frac{y_j -a_j}{b_j - a_j}}\right) \\
	= & \sum_{{\bf k} \in \mathbb{Z}^s}r_{\alpha, \boldsymbol{\gamma}, s}({\bf k})\left(\prod^s_{j=1}\cos \left(\pi k_j \frac{x_j - a_j}{b_j - a_j}\right)\right) e^{\imath \pi {\bf k}\cdot{\bf \frac{y-a}{b-a}}},
\end{align*}
and for the one-dimensional version, 
\begin{align*}
	\mathcal{K}_{\alpha, \gamma}^{\cos}(\mathcal{D}, x, y) 
	:= & 
	1 + \sum^\infty_{k = 1}r_{\alpha, \gamma}(k) \sqrt{2}\cos \left(\pi k \frac{x-a}{b-a}\right) \sqrt{2} \cos \left(\pi k \frac{y-a}{b-a}\right) \\
	= & 
	1 + \sum^\infty_{k=1}r_{\alpha, \gamma}(k)\cos\left(\pi k \frac{x-a}{b-a}\right) \left(e^{\imath \pi k \frac{y-a}{b-a}} + e^{-\imath \pi k \frac{y-a}{b-a}}\right)\\
	= &
	\sum _{k \in \mathbb{Z}} r_{\alpha, \gamma}(k)\cos \left(\pi k \frac{x-a}{b-a}\right) e^{\imath \pi k \frac{y-a}{b-a}}.\\
\end{align*}
For any ${\bf y} \in \mathcal{D}$ and $\mathfrak{f} \in \mathcal{H}_{\mathcal{K}^{\cos}_{\alpha, \boldsymbol{\gamma}, s} (\mathcal{D})}$, we have the reproducing property, $$\mathfrak{f}({\bf y}) = <\mathfrak{f}, \mathcal{K}^{\cos}_{\mathcal{D}, \alpha, \boldsymbol{\gamma}, s}(\cdot, y)>_{\mathcal{K}^{\cos}_{\alpha, {\boldsymbol{\gamma}}, s}(\mathcal{D})}.$$
Readers are referred to \cite{Aronszain_1950_theory} and \cite{dick_2010_digital} for further information on reproducing kernel Hilbert spaces.

In this work, we use an alternative kernel which drops the $r$ function. 
It is defined as 
\begin{align*}
	\mathcal{K}^{\cos}_{s}({\bf x}, {\bf y}) 
	:= & 
	\prod^s_{j=1}  \left(\sum _{k_j \in \mathbb{Z}} \cos \left(\pi k_j \frac{x_j- a_j}{b_j - a_j}\right) e^{\imath \pi k_j \frac{y_j -a_j}{b_j - a_j}}\right) 
	= 
	\sum_{{\bf k} \in \mathbb{Z}^s} \left(\prod^s_{j=1}\cos \left(\pi k_j \frac{x_j - a_j}{b_j - a_j}\right)\right) e^{\imath \pi {\bf k}\cdot{\bf \frac{y-a}{b-a}}}.
\end{align*}
We suppress the $\mathcal{D}$ part here to simplify our notation.  

Using the reproducing property, we have the following equation: 
\begin{equation}
	\mathfrak{f}({\bf y}) = \sum_{{\bf k}\in \mathbb{N}^s} \widetilde{\mathfrak{f}}_{\cos}({\bf k})(\sqrt{2})^{|{\bf k}|_0}\prod_{j=1}^{s}\cos\left(\pi k_i \frac{y_i -a_i}{b_i - a_i}\right),
	\label{equation_reproducing_property}
\end{equation}
for any  $\mathfrak{f} \in \mathcal{H}_{\mathcal{K}^{\cos}_{\alpha, {\boldsymbol{\gamma}}, s}(\mathcal{D})}$ and ${\bf y} \in \mathcal{D}$. 
We use the expansion at the right-hand side of \eqref{equation_reproducing_property} as the replacement integrand in Equation \eqref{equation_expectation}, denoted by $\bar{\mathfrak{f}}$. 
\begin{definition}[Half-period Cosine Expansion]
	 For any given function  $\mathfrak{f}: \mathbb{R}^s \rightarrow \mathbb{R}$ and given domain $\mathcal{D} \subset \mathbb{R}^s$, the half-period cosine expansion, $\bar{\mathfrak{f}}: \mathbb{R}^s \rightarrow \mathbb{R}$, is defined as 
	\begin{equation*}
	\bar{\mathfrak{f}}({\bf y}) := \sum_{{\bf k}\in \mathbb{N}^s} \widetilde{\mathfrak{f}}_{\cos}({\bf k})(\sqrt{2})^{|{\bf k}|_0}\prod_{j=1}^{s}\cos\left(\pi k_i \frac{y_i -a_i}{b_i -a_i}\right), \label{equation_half-period_cosine_expansion}
	\end{equation*}
	where the cosine coefficients are defined as
	\begin{equation*}
	\widetilde{\mathfrak{f}}_{\cos}({\bf k}) := \int_{[0, 1]^s} \mathfrak{f}({\bf y}\times({\bf b-a}) + {\bf a}) 2^{|{\bf k}|_0/2}\prod^s_{j=1}\cos (\pi k_j y_j) d{\bf y}.
	\end{equation*}
\end{definition}

\subsection{Lattice rule approximations} \label{section_lattice_integration}

There are essentially two drawbacks when extending the COS method to higher dimensions. 
First, the cosine coefficient $\widetilde{\mathfrak{f}}_{\cos}$ may be difficult to calculate, especially in a recurring situation. 
In our previous work \cite{chau_2018_wavelet}, we aimed to remedy this shortcoming by adopting a wavelet basis such that we can approximate Equation \eqref{equation_expectation} as a weighted sum of local values, in the form of $$\mathbb{E}[\mathfrak{f}(Y)]\approx \frac{1}{N}\sum_{n=1-N}^N\mathfrak{f}\left(\frac{r}{2^{\mathtt{m}}}\right)\mathbb{E}[\varphi_{N, n}(Y)],$$ with $\varphi$ being the wavelet basis functions. 
We will compare our current scheme to that work in Section \ref{section_cosine_wavelets}.

The second point of concern is that if we simply apply the tensor product of cosine spaces in a higher-dimensional case, as we have done in the last section, we will face the curse of dimensionality. 

Here, we aim to address the above two issues by constructing quadrature rules for the integration of
\begin{equation}
	 \int_{\mathbb{R}^s} \bar{\mathfrak{f}}({\bf y})\mu(d {\bf y}). \label{equation_cosine_integration}
\end{equation}
In particular, the approximant takes the specific form,
\begin{equation}
	\int_{\mathbb{R}^s} \frac{1}{N} \sum^{N-1}_{n=0} 
	\mathfrak{f}\left({\bf p}_n\right) 
	\mathcal{K}^{\cos}_s\left({\bf p}_n, {\bf y}\right) \mu (d{\bf y}), \label{equation_quadrature_rule}
\end{equation}
where $\mathit{P}:= \{{\bf p}_0, \ldots, {\bf p}_{N-1}\}$ is some predetermined quadrature points set.

For the half-period cosine space that we adopted in the previous section, Dick et. al. showed in \cite{dick_2014_lattice} that a combination of rank-1 lattice rules and tent transformations converges well in the context of quasi-Monte Carlo methods. 
We will now introduce the quadrature points proposed in \cite{dick_2014_lattice} for the half-period cosine space, which we should adapt for the approximation of Equation \eqref{equation_quadrature_rule}.

A lattice point set with $N\geq 2$ points and generating vector ${\bf g}\in [N-1]^s$ is given by $$\mathit{P}({\bf g}, N):= \left\{\left\{\frac{n {\bf g}}{N}\right\} : 0 \leq n < N\right\}.$$
The {\it tent transformation}, $\phi: [0, 1] \rightarrow [0, 1]$ is defined as $$\phi(x) := 1 - |2x-1|,$$
and the higher-dimensional version $\boldsymbol{\phi}: [0, 1]^s \rightarrow [0, 1]^s$ is obtained by applying the function component-wise. 
The tent-transformed lattice point set in \cite{dick_2014_lattice} is given by $$\mathit{P}_{\boldsymbol{\phi}}({\bf g}, N) :=\left\{\boldsymbol{\phi}\left(\left\{\frac{n{\bf g}}{N}\right\}\right):0\leq n <N \right\}.$$
However, since we have to apply the quadrature points on a general box $\mathcal{D}$, instead of just on $[0, 1]^s$, we have to transform the lattice rules: 
\begin{align*}
	\mathit{P}_{\boldsymbol{\phi}}({\bf g}, N, \mathcal{D}) 
	= &
	\{ {\bf p}_{\boldsymbol{\phi}, 0}, \cdots, {\bf p}_{\boldsymbol{\phi}, N-1}\} 
	:=
	\left\{\boldsymbol{\phi}\left(\left\{\frac{n{\bf g}}{N}\right\}\right) \times {\bf (b - a)} + {\bf a}:0\leq n <N \right\}.
\end{align*}

We wish to quantify the integration error of applying Equation \eqref{equation_quadrature_rule} to approximate the expectation in the form of equation \eqref{equation_cosine_integration} for functions in the half-period cosine space $\mathcal{H}_{\mathcal{K}^{\cos}_{\alpha, \boldsymbol{\gamma}, s}(\mathcal{D})}$.
In addition to the above condition, we also enforce some convergence requirements on the cosine transform of the measure $\mu$. 

\begin{theorem}\label{theorem_lattice}
	Consider any function $\mathfrak{f} \in \mathcal{H}_{\mathcal{K}^{\cos}_{\alpha, \boldsymbol{\gamma}, s}(\mathcal{D})}$ and any probability measure $\mu$ such that
	\begin{equation*}
	\left(\int_{\mathbb{R}^s}\left(\prod^s_{j=1}\cos \left(\pi k_j \frac{y_j - a_j}{b_j -a_j}\right) \right)
	\mu(d {\bf y})\right)^2 \leq r_{\beta, \boldsymbol{\rho}, s}({\bf k}), \label{equation_measure_decay_assumption}
	\end{equation*}
	for some real number $\beta > s$ and vector $\boldsymbol{\rho} \in (\mathbb{R}^+)^s$.
	Furthermore, we assume that $\beta - \alpha > 1/2$, namely, the cosine transform of the measure $\mu$ decays at least algebraically and quicker than the cosine coefficients of $\mathfrak{f}$.
	The error for the numerical integration using the tent-transformed lattice rule on $\mathcal{D}$ is then bounded by
	\begin{align*}
		\epsilon_2(\mathfrak{f}, \mu, \mathit{P}_{\boldsymbol{\phi}}({\bf g}, N, \mathcal{D})) 
		:= &
		\left|\int_{\mathbb{R}^s} 
		\frac{1}{N} \sum^{N-1}_{n=0} 
		\bar{\mathfrak{f}}\left({\bf p}_{\boldsymbol{\phi}, n}\right) 
		\mathcal{K}^{\cos}_s\left({\bf p}_{\boldsymbol{\phi}, n}, {\bf y}\right) \mu (d{\bf y})
		- \int_{\mathbb{R}^s} \bar{\mathfrak{f}}({\bf y}) \mu(d {\bf y})\right| \\
		\leq &
		\left(\sum_{{\bf h}\in L^\bot\backslash\{{\bf 0}\}}r_{\alpha,\boldsymbol{\gamma}, s}({\bf h})\right)^\frac{1}{2}\left(\prod^s_{i=1}C_i\right)^{\frac{1}{2}}
		||\mathfrak{f}||_{\mathcal{K}^{\cos}_{\alpha, \boldsymbol{\gamma}, s}(\mathcal{D})}
	\end{align*}
	for some constant $C_i$, where	$L^\bot := \{{\bf h} \in \mathbb{Z}^s : {\bf h}\cdot{\bf g} \equiv 0 \; (\bmod \;N)\}$ is the dual lattice. 
\end{theorem}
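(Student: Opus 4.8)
The plan is to read the bracketed quantity as the $\mu$-integral of a reconstruction of $\bar{\mathfrak{f}}$ and to control the reconstruction error mode by mode. Writing $\Phi({\bf y}) := \tfrac{1}{N}\sum_{n=0}^{N-1}\bar{\mathfrak{f}}({\bf p}_{\boldsymbol{\phi},n})\,\mathcal{K}^{\cos}_s({\bf p}_{\boldsymbol{\phi},n},{\bf y})$, the error is $\epsilon_2=\big|\int_{\mathbb{R}^s}(\Phi-\bar{\mathfrak{f}})\,\mu(d{\bf y})\big|$. The first step is to exploit the defining feature of the tent transform: for every $k\in\mathbb{Z}$ and $x\in[0,1]$ one has $\cos(\pi k\,\phi(x))=\cos(2\pi k x)$, so that evaluating both the expansion \eqref{equation_reproducing_property} and the kernel $\mathcal{K}^{\cos}_s$ at the points ${\bf p}_{\boldsymbol{\phi},n}$ replaces every factor $\cos\!\big(\pi k_j\tfrac{({\bf p}_{\boldsymbol{\phi},n})_j-a_j}{b_j-a_j}\big)$ by $\cos(2\pi k_j n g_j/N)$. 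Substituting the cosine expansions of $\bar{\mathfrak{f}}$ and of $\mathcal{K}^{\cos}_s$ then turns $\Phi$ and $\bar{\mathfrak{f}}$ into cosine series whose coefficients are built from $\widetilde{\mathfrak{f}}_{\cos}$.

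The heart of the argument is the lattice character sum $\tfrac{1}{N}\sum_{n=0}^{N-1}e^{2\pi\imath\,n\,{\bf h}\cdot{\bf g}/N}$, which equals $1$ when ${\bf h}\in L^{\bot}$ and $0$ otherwise. Writing each product of cosines in exponential form and folding over signs (cosine being even), this orthogonality relation collapses the resulting double sum over frequencies to those pairs whose signed combination lies in $L^{\bot}$. Hence the reconstruction reproduces $\bar{\mathfrak{f}}$ exactly up to aliasing, and the cosine coefficients of $\Phi-\bar{\mathfrak{f}}$ are sums of $\widetilde{\mathfrak{f}}_{\cos}$ over nonzero dual-lattice translates of each frequency. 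Integrating against $\mu$ introduces the cosine moments $M_{\cos}({\bf k}):=\int_{\mathbb{R}^s}\prod_{j=1}^{s}\cos\!\big(\pi k_j\tfrac{y_j-a_j}{b_j-a_j}\big)\,\mu(d{\bf y})$, so that $\epsilon_2$ is rewritten as a double sum over ${\bf k}\in\mathbb{N}^s$ and ${\bf h}\in L^{\bot}\setminus\{{\bf 0}\}$ of terms $\widetilde{\mathfrak{f}}_{\cos}({\bf k})\,M_{\cos}(\cdot)$ carrying explicit powers of $\sqrt{2}$.

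To reach the stated bound I would then apply the Cauchy--Schwarz inequality. Splitting each coefficient as $\widetilde{\mathfrak{f}}_{\cos}({\bf k})=\big(\widetilde{\mathfrak{f}}_{\cos}({\bf k})/\sqrt{r_{\alpha,\boldsymbol{\gamma},s}({\bf k})}\big)\sqrt{r_{\alpha,\boldsymbol{\gamma},s}({\bf k})}$ and summing over ${\bf k}$ makes the first factor assemble into $\|\mathfrak{f}\|_{\mathcal{K}^{\cos}_{\alpha,\boldsymbol{\gamma},s}(\mathcal{D})}$. The surviving factor is a weighted sum over the dual lattice; bounding the measure moments through the hypothesis $M_{\cos}({\bf k})^2\le r_{\beta,\boldsymbol{\rho},s}({\bf k})$ and separating the dependence on ${\bf h}$ from that on ${\bf k}$ produces, respectively, the dual-lattice sum $\sum_{{\bf h}\in L^{\bot}\setminus\{{\bf 0}\}}r_{\alpha,\boldsymbol{\gamma},s}({\bf h})$ --- the quantity that already governs the worst-case error of the tent-transformed lattice rule for this space in \cite{dick_2014_lattice} --- and a product of one-dimensional constants $\prod_{i=1}^{s}C_i$. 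Each $C_i$ is a one-dimensional series combining the weight $r_{\alpha,\gamma_i}$ with the decay weight $r_{\beta,\rho_i}$, and it converges precisely because $\beta-\alpha>1/2$; the conditions $\alpha>1/2$ and $\beta>s$ guarantee that the norm and the dual-lattice sum themselves are finite.

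The main obstacle will be the bookkeeping of the aliasing in the middle step. Because cosines identify $\pm$ frequencies, each congruence ${\bf h}\cdot{\bf g}\equiv 0\ (\mathrm{mod}\ N)$ is reached through several sign patterns, and one must keep track of the $2^{|{\bf k}|_0/2}$ normalisations and of the passage between $\mathbb{N}^s$ and $\mathbb{Z}^s$ indexing so that the terms surviving the character sum collapse exactly onto $L^{\bot}\setminus\{{\bf 0}\}$ with no spurious contributions. Arranging this folding correctly, and then choosing the Cauchy--Schwarz split so that the measure moments and the norm separate into the advertised product form, is the delicate part; the convergence estimates afterwards are routine once $\beta-\alpha>1/2$ is in force.
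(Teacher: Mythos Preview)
Your proposal is correct and follows essentially the same route as the paper: the tent-transform identity $\cos(\pi k\,\phi(x))=\cos(2\pi kx)$, the character-sum orthogonality collapsing the double frequency sum onto $L^\bot\setminus\{{\bf 0}\}$, Cauchy--Schwarz in the ${\bf k}$-sum to extract $\|\mathfrak{f}\|_{\mathcal{K}^{\cos}_{\alpha,\boldsymbol{\gamma},s}(\mathcal{D})}$, and then a one-dimensional convolution bound $\sum_{k}r_{\alpha,\gamma_j}(h_j-k)\,r_{\beta,\rho_j}(k)\le C_j\,r_{\alpha,\gamma_j}(h_j)$ to separate the constants from the dual-lattice sum. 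The paper carries out this last estimate by a three-case analysis on the sign of $h_j$ --- the term $\zeta(2(\beta-\alpha))$ appearing there is exactly where $\beta-\alpha>1/2$ is used --- so your characterisation of that step as routine and of the aliasing bookkeeping as the delicate part is accurate.
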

\begin{proof}
	The general flow of this proof follows the proof of Theorem 2 in \cite{dick_2014_lattice}.
	Let $\mathfrak{f} \in \mathcal{H}_{\mathcal{K}^{\cos}_{\alpha, \boldsymbol{\gamma}, s}(\mathcal{D})}$ and $\bar{\mathfrak{f}}$ be its half-period cosine expansion.
	
	For any $k\in \mathbb{N}$, we have $$\cos(\pi k \phi(y)) = \cos(2\pi k y)\;\mbox{ for all } y\in [0,1],$$ and hence
	\begin{align*}
		\bar{\mathfrak{f}} \left( {\bf p }_{\boldsymbol{\phi}, n}
		\right)
		= &
		\sum_{{\bf k}\in \mathbb{N}^s} \widetilde{\mathfrak{f}}_{\cos}({\bf k})
		(\sqrt{2})^{|{\bf k}|_0} 
		\prod_{j=1}^s\cos\left(\pi k_j \phi\left(\left\{\frac{ng_j}{N} \right\}\right) \right)\\
		= &
		\sum_{{\bf k}\in \mathbb{N}^s} \widetilde{\mathfrak{f}}_{\cos}({\bf k})
		(\sqrt{2})^{|{\bf k}|_0} 
		\prod_{j=1}^s\cos\left(2 \pi k_j \frac{ng_j}{N} \right)\\
		= & 
		\sum_{{\bf h}\in \mathbb{Z}^s} (\sqrt{2})^{-|{\bf h}|_0}
		    \widetilde{\mathfrak{f}}_{\cos}({\bf h}) e^{2\pi \imath n ({\bf h}\cdot{\bf g})/N}.
	\end{align*}
	The reproducing kernel can be rewritten as 
	\begin{align*}
	\mathcal{K}^{\cos}_{s} \left( {\bf p}_{\boldsymbol{\phi}, n}, {\bf y }\right)
	=&
	\sum_{{\bf k}\in\mathbb{Z}^s}
	\left(\prod^s_{j=1}\cos \left(\pi k_j \frac{y_j - a_j}{b_j -a_j}\right) \right)
	e^{2\pi \imath n ({\bf k}\cdot{\bf g})/N}.
	\end{align*}
	
	Therefore, we obtain 
	\begin{align}
		& 
		\int_{\mathbb{R}^s} 
		\frac{1}{N} \sum^{N-1}_{n=0} 
		\bar{\mathfrak{f}}\left({\bf p}_{\boldsymbol{\phi}, n}\right) 
		\mathcal{K}^{\cos}_s\left({\bf p}_{\boldsymbol{\phi}, n}, {\bf y}\right) \mu (d{\bf y})\nonumber\\
		= & 
		\int_{\mathbb{R}^s}
		\frac{1}{N}\sum^{N-1}_{n=0}
		\sum_{{\bf h}\in\mathbb{Z}^s}(\sqrt{2})^{-|{\bf h}|_0}
		\widetilde{\mathfrak{f}}_{\cos}({\bf h})e^{2\pi \imath n ({\bf h}\cdot{\bf g})/N}
			\sum_{{\bf k}\in\mathbb{Z}^s}
		\left(\prod^s_{j=1}\cos \left(\pi k_j \frac{y_j - a_j}{b_j -a_j}\right) \right)
		e^{2\pi \imath n ({\bf k}\cdot{\bf g})/N}
		\mu(d{\bf y})\nonumber\\
		= & 
		\int_{\mathbb{R}^s}
		\sum_{{\bf h}\in\mathbb{Z}^s}\sum_{{\bf k}\in\mathbb{Z}^s}
		(\sqrt{2})^{-|{\bf h}|_0}\widetilde{\mathfrak{f}}_{\cos}({\bf h})
		\left(\prod^s_{j=1}\cos \left(\pi k_j \frac{y_j - a_j}{b_j -a_j}\right) \right)
		\frac{1}{N}\sum^{N-1}_{n=0}
		e^{2\pi \imath n [({\bf h}+{\bf k})\cdot {\bf g}]/N}
		\mu(d{\bf y}).\nonumber
	\end{align}
	The sum $\frac{1}{N}\sum^{N-1}_{n=0} e^{2\pi \imath n [({\bf h}+{\bf k})\cdot {\bf g}]/N}$ is a character sum over the group $\mathbb{Z}/N\mathbb{Z}$, which is equal to one if ${\bf (h+k)}\cdot {\bf g}$ is a multiple of $N$ and zero otherwise.
	From this, we get 
	\begin{align}
		&
		\int_{\mathbb{R}^s} 
		\frac{1}{N} \sum^{N-1}_{n=0} 
		\bar{\mathfrak{f}}\left({\bf p}_{\boldsymbol{\phi}, n}\right) 
		\mathcal{K}^{\cos}_s\left( {\bf p}_{\boldsymbol{\phi}, n}, {\bf y}\right) \mu (d{\bf y})
		- \int_{\mathbb{R}^s} \bar{\mathfrak{f}}({\bf y}) \mu(d {\bf y}) \nonumber\\
		= &
		\int_{\mathbb{R}^s}
		\sum_{{\bf h}\in L^\bot\backslash\{{\bf 0}\}}\sum_{{\bf k}\in \mathbb{Z}^s}
		(\sqrt{2})^{-|{\bf h}-{\bf k}|_0}\widetilde{\mathfrak{f}}_{\cos}({\bf h}-{\bf k})
		\left(\prod^s_{j=1}\cos \left(\pi k_j \frac{y_j - a_j}{b_j -a_j}\right) \right)
		\mu(d{\bf y}). \label{equation_error_expression}
	\end{align}
	Based on this formula and an application of the Cauchy-Schwarz inequality, we obtain
	\begin{align}
	&
	\left|\int_{\mathbb{R}^s} 
	\frac{1}{N} \sum^{N-1}_{n=0} 
	\bar{\mathfrak{f}}\left({\bf p}_{\boldsymbol{\phi}, n}\right) 
	\mathcal{K}^{\cos}_s\left( {\bf p}_{\boldsymbol{\phi}, n}, {\bf y}\right) \mu (d{\bf y})
	- \int_{\mathbb{R}^s} \bar{\mathfrak{f}}({\bf y}) \mu(d {\bf y})\right| \nonumber\\
	= & 
	\left|\sum_{{\bf h}\in L^\bot\backslash\{{\bf 0}\}}\sum_{{\bf k}\in\mathbb{Z}^s}
	(\sqrt{2})^{-|{\bf h}-{\bf k}|_0}\widetilde{\mathfrak{f}}_{\cos}({\bf h}-{\bf k})
	\int_{\mathbb{R}^s}\left(\prod^s_{j=1}\cos \left(\pi k_j \frac{y_j - a_j}{b_j -a_j}\right) \right)
	\mu(d {\bf y})\right| \nonumber\\
	\leq & 
	\left( \sum_{{\bf h}\in L^\bot\backslash\{{\bf 0}\}} 
	\left(\sum_{{\bf k}\in\mathbb{Z}^s}
	(\sqrt{2})^{-|{\bf h}-{\bf k}|_0}\widetilde{\mathfrak{f}}_{\cos}({\bf h}-{\bf k})
	\int_{\mathbb{R}^s}\left(\prod^s_{j=1}\cos \left(\pi k_j \frac{y_j - a_j}{b_j -a_j}\right) \right)
	\mu(d {\bf y})
	\right)^2
	\right)^\frac{1}{2}\nonumber\\
	\leq & \left( 
	\sum_{{\bf h}\in L^\bot\backslash\{{\bf 0}\}}
	\left(\sum_{{\bf k}\in\mathbb{Z}^s}
	\frac{
		2^{-|{\bf h}-{\bf k}|_0}|\widetilde{\mathfrak{f}}_{\cos}({\bf h}-{\bf k})|^2}
	{r_{\alpha, \boldsymbol{\gamma}, s}({\bf h}-{\bf k})}\right) 
	\left(\sum_{{\bf k}\in\mathbb{Z}^s}
	r_{\alpha, \boldsymbol{\gamma}, s}({\bf h}-{\bf k})
	\left(
	\int_{\mathbb{R}^s}\left(\prod^s_{j=1}\cos \left(\pi k_j \frac{y_j - a_j}{b_j -a_j}\right) \right)
	\mu(d {\bf y})
	\right)^2\right)
	\right)^\frac{1}{2}\nonumber\\
	\leq & 
	\left(\sum_{{\bf h}\in L^\bot\backslash\{{\bf 0}\}} 
	\sum_{{\bf k}\in\mathbb{Z}^s}
	r_{\alpha, \boldsymbol{\gamma}, s}({\bf h}-{\bf k})
	\left(
	\int_{\mathbb{R}^s}\left(\prod^s_{j=1}\cos \left(\pi k_j \frac{y_j - a_j}{b_j -a_j}\right) \right)
	\mu(d {\bf y})
	\right)^2
	\right)^\frac{1}{2}
	||\mathfrak{f}||_{\mathcal{K}^{\cos}_{\alpha, \boldsymbol{\gamma}, s}(\mathcal{D})} \nonumber\\
	=: & ||\mathfrak{f}||_{\mathcal{K}^{\cos}_{\alpha, \boldsymbol{\gamma}, s}(\mathcal{D})}
	\left(\sum_{{\bf h}\in L^\bot\backslash\{{\bf 0}\}}I({\bf h})\right)^\frac{1}{2}. \label{equation_cauchy_schwarz_error}
	\end{align}

	Making use of the smoothness assumption on the probability measure, we find
	\begin{align}
	I({\bf h}) 
	= &
	\sum_{{\bf k}\in\mathbb{Z}^s}
	r_{\alpha, \boldsymbol{\gamma}, s}({\bf h}-{\bf k})
	\left(
	\int_{\mathbb{R}^s}\left(\prod^s_{j=1}\cos \left(\pi k_j \frac{y_j - a_j}{b_j -a_j}\right) \right)
	\mu(d {\bf y})
	\right)^2 \nonumber\\
	\leq &
	\sum_{{\bf k}\in\mathbb{Z}^s}
	r_{\alpha, \boldsymbol{\gamma}, s}({\bf h}-{\bf k})
	r_{\beta, \boldsymbol{\rho}, s}({\bf k})
	\nonumber \\
	= &
	\sum_{{\bf k}\in\mathbb{Z}^s}
	\prod_{j=1}^{s}
	r_{\alpha, \gamma_j}(h_j-k_j)
	r_{\beta, \rho_j}(k_j) \nonumber \\
	= &
	\prod_{j=1}^{s}
	\sum_{k_j = -\infty}^\infty
	r_{\alpha, \gamma_j}(h_j-k_j)
	r_{\beta, \rho_j}(k_j) \label{equation_sum_of_coverging}
	=: \prod_{j=1}^{s} I_j(h_j).
	\end{align}
	
	In order to control the error, we need to control the sum $I_j$ in Equation \eqref{equation_sum_of_coverging} for all $h_j \in \mathbb{Z}$. 
	We have three different cases to consider.
	\paragraph{Case 1: $h_j=0.$} 
	In this case, 
	\begin{align*}
	I_j(0) = &
	\sum_{k_j = -\infty}^\infty
	r_{\alpha, \gamma_j}(-k_j)
	r_{\beta, \rho_j}(k_j) 
	=
	1 +  2\gamma_j\rho_j\sum^\infty_{k_j=1} \frac{1}{(k_j)^{2\alpha + 2\beta}} = 
	1 +  2\gamma_j\rho_j\zeta(2\alpha + 2\beta),
	\end{align*}
	in which $\zeta$ denotes the Riemann zeta function.
	
	\paragraph{Case 2: $h_j > 0.$}
	In this case, 
	\begin{align*}
	I_j(h_j) = &
	\sum_{k_j = -\infty}^\infty
	r_{\alpha, \gamma_j}(h_j-k_j)
	r_{\beta, \rho_j}(k_j) \\
	= & 
	\sum^{\infty}_{k_j = 1} r_{\alpha, \gamma_j}(h_j+k_j)
	r_{\beta, \rho_j}(k_j) 
	+ r_{\alpha, \gamma_j}(h_j) 
	+ \sum^{h_j-1}_{k_j = 1} r_{\alpha, \gamma_j}(h_j-k_j)
	r_{\beta, \rho_j}(k_j)
	+ r_{\beta, \rho_j}(h_j) \\
	&
	+ \sum^\infty_{k_j = 1}r_{\alpha, \gamma_j}(k_j)
	r_{\beta, \rho_j}(h_j + k_j)\\
	\leq &
	\gamma_j |h_j|^{-2\alpha} \sum^\infty_{k_j = 1} \rho_j |k_j|^{-2\beta} + r_{\alpha, \gamma_j}(h_j) 
	 + \sum^{h_j-1}_{k_j = 1} \gamma_j |h_j - k_j|^{-2\alpha}\rho_j |k_j|^{-2\beta} + \frac{\rho_j}{\gamma_j} r_{\alpha, \gamma_j}(h_j) \\
	&
	 + \frac{\rho_j}{\gamma_j}r_{\alpha, \gamma_j}(h_j)\sum^\infty_{k_j =1} 
	r_{\alpha, \gamma_j}(k_j)\\
	\leq & 
	\rho_j \zeta(2\beta) r_{\alpha, \gamma_j}(h_j)+ r_{\alpha, \gamma_j}(h_j) 
	 + \gamma_j |h_j|^{-2\alpha}2^{2\alpha} \sum^{h_j-1}_{k_j = 1} \rho_j |k_j|^{-2(\beta-\alpha)} + \frac{\rho_j}{\gamma_j} r_{\alpha, \gamma_j}(h_j) \\
	&
	 + \rho_j r_{\alpha, \gamma_j}(h_j)\sum^\infty_{k_j =1} 
	|k_j|^{-2 \alpha }\\
	\leq & 
	\rho_j \zeta(2\beta) r_{\alpha, \gamma_j}(h_j)+ r_{\alpha, \gamma_j}(h_j) + 2^{2\alpha} \rho_j \zeta(2(\beta-\alpha)) r_{\alpha, \gamma_j} (h_j)+ \frac{\rho_j}{\gamma_j} r_{\alpha, \gamma_j}(h_j) + \rho_j \zeta(2\alpha) r_{\alpha, \gamma_j}(h_j).
	\end{align*}
	The above inequality simply follows from the definition and the orderings $|h_j + k_j|^{-1} < |h_j|^{-1}$, $|h_j|^{-\beta} < |h_j|^{-\alpha}$ and $(h_j -1) \times 1 < (h_j -2) \times 2 < \cdots \left(\frac{h_j}{2}\right)$.
	Note that we use the convention $\sum^0_{k_j = 1} =0$ here, so the inequality also holds for $h_j = 1$. 
	
	\paragraph{Case 3: $h_j < 0.$} Finally, in this case, 
	\begin{align*}
	I_j(h_j) = &
	\sum_{k_j = -\infty}^\infty
	r_{\alpha, \gamma_j}(h_j-k_j)
	r_{\beta, \rho_j}(k_j) \\
	= & 
	\sum_{k_j = 1}^{\infty}
	r_{\alpha, \gamma_j}(k_j)
	r_{\beta, \rho_j}(k_j-h_j) 
	+ r_{\beta, \rho_j}(h_j)
	+\sum_{k_j = 1}^{-h_j-1}
	r_{\alpha, \gamma_j}(h_j+k_j)
	r_{\beta, \rho_j}(k_j) 
	+ r_{\alpha, \gamma_j}(h_j)\\
	& + \sum_{k_j = 1}^\infty
	r_{\alpha, \gamma_j}(h_j-k_j)
	r_{\beta, \rho_j}(k_j)\\
	\leq & 
	\rho_j \zeta(2 \alpha) r_{\alpha, \gamma_j}(h_j) + \frac{\rho_j}{\gamma_j} r_{\gamma_j, \alpha}(h_j) \\
	&
	+ \sum_{k_j = 1}^{-h_j-1} \gamma_j |h_j + k_j|^{-2\alpha} \rho_j |k_j|^{-2\beta}
	+ r_{\alpha, \gamma_j}(h_j) + \rho_j \zeta(2 \beta) r_{\alpha, \gamma_j}(h_j)\\
	\leq & 
	\rho_j (\zeta(2\alpha) + \zeta(2\beta) )r_{\alpha, \gamma_j}(h_j) + \frac{\rho_j}{\gamma_j} r_{\gamma_j, \alpha}(h_j) + r_{\alpha, \gamma_j}(h_j) + 2^{2\alpha}r_{\alpha, \gamma_j} (h_j)\rho_j \zeta(2(\beta-\alpha)).
	\end{align*}
	The derivation of Case 3 is similar to Case 2. 
	The only difference is that we have the following orderings: $|-h_j+k_j| > |h_j|$ for $k_j > 0$ and $(-h_j -1)\times 1 < (-h_j-2) \times 2 < \cdots < \left(\frac{h_j}{2}\right)^2$.
	
	Finally, let $C_j = \max \{1 +  2\gamma_j\rho_j\zeta(2\alpha + 2\beta), 1 + \rho_j\left(\zeta(2\alpha) + \zeta(2\beta) + \frac{1}{\gamma_j} + 2^{2\alpha} \zeta(2(\beta-\alpha))\right)\}$, which is independent of $h_j$. 
	Then, 
	\begin{align*} 
	&
	\left|\int_{\mathbb{R}^s}
	\sum_{{\bf h}\in L^\bot\backslash\{{\bf 0}\}}\sum_{{\bf k}\in\mathbb{Z}^s}
	(\sqrt{2})^{-|{\bf h}-{\bf k}|_0}\widetilde{f}_{\cos}({\bf h}-{\bf k})
	\left(\prod^s_{j=1}\cos \left(\pi k_j \frac{y_j - a_j}{b_j -a_j}\right) \right)
	\mu(d{\bf y}) \right|\\
	\leq &  
	\left(\sum_{{\bf h}\in L^\bot\backslash\{{\bf 0}\}}\prod_{j =1}^{s}C_jr_{\alpha, \gamma_j}(h_j)\right)^\frac{1}{2} ||\mathfrak{f}||_{\mathcal{K}^{\cos}_{\alpha, \boldsymbol{\gamma}, s}(\mathcal{D})}
	= \left(\sum_{{\bf h}\in L^\bot\backslash\{{\bf 0}\}}r_{\alpha,\boldsymbol{\gamma}, s}({\bf h})\right)^\frac{1}{2}\left(\prod^s_{j=1}C_j\right)^{\frac{1}{2}}
	||\mathfrak{f}||_{\mathcal{K}^{\cos}_{\alpha, \boldsymbol{\gamma}, s}(\mathcal{D})}.
	\end{align*}
	
	In this proof, we break down the sum in Equation \eqref{equation_cauchy_schwarz_error} and derive a bound for each dual lattice point ${\bf h}$ along each direction, namely, the $I_j(h_j)$ terms in \eqref{equation_sum_of_coverging}. 
	By considering the three possible cases for $h_d$, positive, negative and zero, we provide a bound in each case. 
	It is necessary to separate the three cases as we have to identify the section where both $|h_j -k_j|$ and $|k_j|$ are smaller than  $|h_j|$ when $k_j$ is moving along the real line and apply a separated bound on these sections. 
	
	Taking the maximum out of the three cases and putting $I_j(h_j)$ back into the sum in Equation \eqref{equation_quadrature_rule} finishes the proof.
\end{proof}

\begin{remark} \label{remark_uniform_distribution}
	The integrals of the form, $\int_{[0,1]^s}\mathfrak{f}({\bf y})d{\bf y},$ can be seen as special cases of Equation \eqref{equation_cosine_integration} where the probability measure is an uniform distribution over $\mathcal{D} = [0, 1]^s$.
	In this case, 
	\begin{align*}
		\int_{\mathbb{R}^s}\left(\prod^s_{j=1}\cos \left(\pi k_j \frac{y_j - a_j}{b_j -a_j}\right) \right)
		\mu(d {\bf y})
		= &
		\int_{[0, 1]^s}\left(\prod^s_{j=1}\cos \left(\pi k_j y_j \right) \right)
		d {\bf y}\\
		= &
		\left\{\begin{array}{ll}
		1 & \mbox{ if } k \equiv {\bf 0}; \\
		0 & \mbox{ otherwise.}
		\end{array}\right.\
	\end{align*}
	Therefore, we may simplify the square of the term in Equation \eqref{equation_cauchy_schwarz_error}, as follows
	\begin{align*}
		& \sum_{{\bf h}\in L^\bot\backslash\{{\bf 0}\}} 
		\left(\sum_{{\bf k}\in\mathbb{Z}^s}
		(\sqrt{2})^{-|{\bf h}-{\bf k}|_0}\widetilde{\mathfrak{f}}_{\cos}({\bf h}-{\bf k})
		\int_{\mathbb{R}^s}\left(\prod^s_{j=1}\cos \left(\pi k_j \frac{y_j - a_j}{b_j -a_j}\right) \right)
		\mu(d {\bf y})
		\right)^2\\
		= &
		\sum_{{\bf h}\in L^\bot\backslash\{{\bf 0}\}} 
		(\sqrt{2})^{-|{\bf h}|_0}\widetilde{\mathfrak{f}}_{\cos}({\bf h}),
	\end{align*}
	which is the same as the right-hand side of Equation (6) in \cite{dick_2014_lattice}.
	Therefore, our numerical integration can be seen as an extension of the Quasi-Monte Carlo (QMC) rules in \cite{dick_2014_lattice}.
\end{remark}

The above proof is rather rough, note the $2^{2\alpha}$ term in the last two cases. 
However, it suggests we can still relate the integration error to the smoothness of the target function $\mathfrak{f}$ in terms of the cosine coefficients. 
The error can be controlled by the sum $\left(\sum_{{\bf h}\in L^\bot\backslash\{{\bf 0}\}}r_{\alpha,\boldsymbol{\gamma}, s}({\bf h})\right)^{1/2}$. 
Thus all the results for the worst-case error for the QMC integration in the half-period cosine space using tent-transformed lattice rules can be used here.
This opens the door for applying the generating vector for extensible lattice rules from \cite{hickernell_2012_weighted} and other results from the QMC literature to the half-period cosine expansion scheme.
We believe this result is a promising starting point for the study of lattice expansions for higher-dimension numerical integration with general finite measure. 

\subsection{Full Approximation Schemes and Errors}

Now we can introduce the full approximation scheme. 
The final obstacle lies in the reproducing kernel, which is defined as an infinite sum. 
Instead of calculating the full sum, a truncated version is used in the actual algorithm.
We define 
\begin{align}
\mathcal{K}^{\cos}_{s, K}({\bf x}, {\bf y}) 
:= &
\sum_{{\bf k} \in \mathbb{Z}^s,\; \sum|k_i| \leq K}\left(\prod^s_{j=1}\cos \left(\pi k_j \frac{x_j - a_j}{b_j - a_j}\right)\right) e^{\imath \pi {\bf k}\cdot{\bf \frac{y-a}{b-a}}}. \label{equation_truncated_kernel}
\end{align}
For any expectation that satisfies the conditions in the previous section, we have our full approximant: 
\begin{align}
	&
	\mathbb{E}[\mathfrak{f}({\bf Y})]\nonumber \\
	\approx & 
	\int_{\mathbb{R}^s} 
	\frac{1}{N} \sum^{N-1}_{n=0} 
	\bar{\mathfrak{f}}\left({\bf p}_{\boldsymbol{\phi}, n}\right) 
	\mathcal{K}^{\cos}_{s, K}\left({\bf p}_{\boldsymbol{\phi}, n}, {\bf y}\right) \mu (d{\bf y})\nonumber\\
	= &
	\frac{1}{N} \sum^{N-1}_{n=0} 
	\bar{\mathfrak{f}}\left({\bf p}_{\boldsymbol{\phi}, n}\right)
	\int_{\mathbb{R}^s} 
	\sum_{{\bf k} \in \mathbb{Z}^s,\; \sum|k_i| \leq K}\left(\prod^s_{j=1}\cos \left(\pi k_j \phi\left(\left\{\frac{n {g_j}}{N}\right\}\right)\right)\right) e^{\imath \pi {\bf k}\cdot{\bf\frac{y-a}{b-a}}}
	\mu (d{\bf y}) \nonumber\\
	= &
	\frac{1}{N} \sum^{N-1}_{n=0} 
	\bar{\mathfrak{f}}\left({\bf p}_{\boldsymbol{\phi}, n}\right)
	\left(
		\sum_{{\bf k} \in \mathbb{Z}^s,\; \sum|k_i| \leq K}\left(\prod^s_{j=1}\cos \left(\pi k_j \phi\left(\left\{\frac{n {g_j}}{N}\right\}\right)\right)\right) 
		e^{-\imath \pi {\bf k}\cdot{\bf\frac{a}{b-a}}}
		\int_{\mathbb{R}^s} 
		e^{\imath \pi {\bf k}\cdot{\bf\frac{y}{b-a}}}
		\mu (d{\bf y})
	\right) \nonumber\\
	= &
	\frac{1}{N} \sum^{N-1}_{n=0} 
    \bar{\mathfrak{f}}\left({\bf p}_{\boldsymbol{\phi}, n}\right)
    \left(
    \sum_{{\bf k} \in \mathbb{Z}^s,\; \sum|k_i| \leq K}\left(\prod^s_{j=1}\cos \left(\pi k_j \phi\left(\left\{\frac{n {g_j}}{N}\right\}\right)\right)\right) 
    e^{-\imath \pi {\bf k}\cdot{\bf\frac{a}{b-a}}}
    \mathcal{F}_{\mu}\left({\frac{{\bf k}\pi }{\bf b-a}}\right)
	\right), \label{equation_cosine_full_scheme}
\end{align}
where $\mathcal{F}_\mu$ is the Fourier transform of the measure $\mu$.
Recall from Equation \eqref{equation_reproducing_property} that $\mathfrak{f}$ and $\bar{\mathfrak{f}}$ coincide in $\mathcal{D}$.
We can simply replace $\bar{\mathfrak{f}}$ by $\mathfrak{f}$ in the above scheme.

In practice, one would first calculate the expected reproducing kernel value $\int_{\mathbb{R}^s} \mathcal{K}^{\cos}_{s, K}\left({\bf p}_{\boldsymbol{\phi}, n}, {\bf y}\right) \mu (d{\bf y})$ for each lattice point, possibly in an offline setting and then calculate the main sum. 

Define the absolute approximation error under this setting as $\epsilon$ and we see that 
\begin{align*}
	\epsilon 
	:= & 
	\left| \mathbb{E}[\mathfrak{f}({\bf Y})] -
	\int_{\mathbb{R}^s} 
	\frac{1}{N} \sum^{N-1}_{n=0} 
	\mathfrak{f}\left({\bf p}_{\boldsymbol{\phi}, n}\right) 
	\mathcal{K}^{\cos}_{s, K}\left({\bf p}_{\boldsymbol{\phi}, n}, {\bf y}\right) \mu (d{\bf y})\right| \\
	\leq & 
	\left| \mathbb{E}[\mathfrak{f}({\bf Y})] - \mathbb{E}[\bar{\mathfrak{f}}({\bf Y})]\right| \\
	&
	+ \left| 
		\mathbb{E}[\bar{\mathfrak{f}}({\bf Y})] 
		- \int_{\mathbb{R}^s} 
		\frac{1}{N} \sum^{N-1}_{n=0} 
		\bar{\mathfrak{f}}\left({\bf p}_{\boldsymbol{\phi}, n}\right) 
		\mathcal{K}^{\cos}_s\left({\bf p}_{\boldsymbol{\phi}, n}, {\bf y}\right) \mu (d{\bf y})
	\right| \\
	&
	+ \left| 
		\int_{\mathbb{R}^s} 
		\frac{1}{N} \sum^{N-1}_{n=0} 
		\bar{\mathfrak{f}}\left({\bf p}_{\boldsymbol{\phi}, n}\right) 
		\mathcal{K}^{\cos}_s\left({\bf p}_{\boldsymbol{\phi}, n}, {\bf y}\right) \mu (d{\bf y})
		-
		\int_{\mathbb{R}^s} 
		\frac{1}{N} \sum^{N-1}_{n=0} 
		\bar{\mathfrak{f}}\left({\bf p}_{\boldsymbol{\phi}, n}\right) 
		\mathcal{K}^{\cos}_{s, K}\left({\bf p}_{\boldsymbol{\phi}, n}, {\bf y}\right) \mu (d{\bf y}) 
	\right|\\
	\leq & \mathbb{E}[\left| \mathfrak{f}({\bf Y}) - \bar{\mathfrak{f}}({\bf Y})\right| {\bf 1}_{\{\mathbb{R}^s\backslash \mathcal{D}\}}({\bf Y})] 
	+ \left(\sum_{{\bf h}\in L^\bot\backslash\{{\bf 0}\}}r_{\alpha,\boldsymbol{\gamma}, s}({\bf h})\right)^\frac{1}{2}\left(\prod^s_{i=1}C_i\right)^{\frac{1}{2}}
	||\mathfrak{f}||_{\mathcal{K}^{\cos}_{\alpha, \boldsymbol{\gamma}, s}(\mathcal{D})} \\
	&
	+ \frac{1}{N} \sum^{N-1}_{n=0} 
	\left|\bar{\mathfrak{f}}\left({\bf p}_{\boldsymbol{\phi}, n}\right)\right| \left(
	\sum_{{\bf k} \in \mathbb{Z}^s,\; \sum|k_i| > K}	
	\left|\int_{\mathbb{R}^s} 
	\left(\prod^s_{j=1}\cos \left(\pi k_j \frac{y_j - a_j}{b_j -a_j}\right) \right)
	\mu (d{\bf y})\right|\right).
\end{align*}

For the third term, a bit more detail is required.
Again we have the convergence assumption on the cosine transform of measure $\mu$ with the additional condition that $\beta > s$.
With the inequality $(K -1)\times 1 < (K-2) \times 2 < \ldots < \left(\frac{K}{2}\right)^2$, for all $K$ with $K \in \mathbb{N}$ and $K > 2$ and mathematical induction, one can show that if $\sum|k_i| = K$, $$(r_{\beta, \boldsymbol{\rho}, s}({\bf k}))^\frac{1}{2} \leq |K - s|^{-\beta}\prod^s_{j=1}\max\{1, \sqrt{\rho_j}\}.$$
There are in total $\frac{1}{(s-1)!}\prod^{s-1}_{j =1}(K + j)$ s-tuples of natural numbers satisfying $\sum^s_{i =1}k_i = K$ for $s\geq 2$. (see \cite{lisi_2007_some} for further explanation.)
Taking into account all the possible combinations of positive and negative signs among all dimensions when we consider all integers, we have 
\begin{align*}
	\sum_{{\bf k} \in \mathbb{Z}^s,\; \sum|k_i| > K}(r_{\beta, \boldsymbol{\rho}, s}({\bf k}))^\frac{1}{2} 
	\leq & 
	\sum^\infty_{k = K+1}2^s \frac{1}{(s-1)!}\prod^{s-1}_{j =1}(k + j) \times |k - s|^{-\beta}\prod^s_{j=1}\max\{1, \sqrt{\rho_j}\}\\
	\leq &
	\frac{2^s}{(s-1)!} \left(1 + \frac{2s-1}{K}\right)^{s-1}\prod^s_{j=1}\max\{1, \sqrt{\rho_j}\}
	\sum^\infty_{k = K+1} |k - s|^{-(\beta + 1 -s)}.
\end{align*}
The final inequality also holds when $s=1$.

Therefore, we find the following error bound for the lattice expansion scheme:
\begin{align}
\epsilon
\leq & \mathbb{E}[\left| \mathfrak{f}({\bf Y}) - \bar{\mathfrak{f}}({\bf Y})\right| {\bf 1}_{\{\mathbb{R}^s\backslash \mathcal{D}\}}({\bf Y})] 
+ \left(\sum_{{\bf h}\in L^\bot\backslash\{{\bf 0}\}}r_{\alpha,\boldsymbol{\gamma}, s}({\bf h})\right)^\frac{1}{2}\left(\prod^s_{i=1}C_i\right)^{\frac{1}{2}}
||\mathfrak{f}||_{\mathcal{K}^{\cos}_{\alpha, \boldsymbol{\gamma}, s}(\mathcal{D})} \nonumber\\
&
+ \frac{1}{N} \sum^{N-1}_{n=0} 
\left|\bar{\mathfrak{f}}\left({\bf p}_{\boldsymbol{\phi}, n}\right)\right| 
\frac{2^s}{(s-1)!} \left(1 + \frac{2s-1}{K}\right)^{s-1}\prod^s_{j=1}\max\{1, \sqrt{\rho_j}\}
|K - s|^{-(\beta -s)}. \label{equation_total_error_bound}
\end{align}

To conclude, we can apply the telescoping technique and describe our error by three separate pieces: the projection error from a non-periodic to a periodic function, the lattice integration error and the kernel truncation error. 
With this proof, we successfully extended the lattice integration in \cite{dick_2014_lattice} to a more general setting and made use of both the smoothness of the integrand in terms of cosine coefficients and the convergence of the cosine transform with respect to the probability measure.
However, with our ``number theory based'' derivation, the model dimension $s$ still heavily impacts the complete error bound, which may be an obstacle when applying the method to higher-dimensional problems.
Moreover, the assumption of the cosine transform's decay is rough.
Results from Fourier analysis may be incorporated to further improve the error bound.
Further study for the error control is therefore recommended.

\section{Discussion}\label{section_discussion}

In Section \ref{section_scheme}, we have provided a justification for the cosine expansion lattice scheme. 
We extended the tent transformed lattice rule in the half-cosine space to general probability measures, with its error controlled by the decay rate of the cosine transform of the probability measures as well as the cosine coefficients of the integrands.

However, there are remaining questions. Is there a better way to control the approximation error than simple algebraic manipulation? How does our current scheme connect to previous work on wavelets? 
Here we aim to present some discussion on the above issues. 

\subsection{Alternative Error Formulation}

The first possibility we would like to consider is whether the error of the approximation in Section \ref{section_lattice_integration} can be written in an alternative form. 
By doing so, we aim to avoid bounding the term $I({\bf h})$ along each dimension, as this estimation is rough.  

We revisit the derivation of $\epsilon_2$ in Section \ref{section_lattice_integration}, specifically the right-hand side of Equation \eqref{equation_error_expression}. 
Note that the innermost sum in the expression can be rewritten as follows:
\begin{align*}
&
\sum_{{\bf k}\in\mathbb{Z}^s}
(\sqrt{2})^{-|{\bf h}-{\bf k}|_0}\widetilde{\mathfrak{f}}_{\cos}({\bf h}-{\bf k})
\left(\prod^s_{j=1}\cos\left(\pi k_j \frac{y_j-a_j}{b_j-a_j}\right)\right)\\
= &
\sum_{\vec{k}\in\mathbb{Z}^s} 
\left(\prod^s_{j=1}\cos\left(\pi k_j \frac{y_j-a_j}{b_j-a_j}\right)\right)
\int_{[0,1]^s}\mathfrak{f}({\bf z}\times {\bf (b-a)} + {\bf a})\prod^s_{j=1}\cos(\pi (h_j-k_j)z_j)d{\bf z}\\
=&
\sum^\infty_{k_1 = 1}\sum_{{\bf k}_{-1}\in\mathbb{Z}^{s-1}} 
\cos\left(\pi k_1 \frac{y_1-a_1}{b_1-a_1}\right)\left(\prod^s_{j=2}\cos\left(\pi k_j \frac{y_j-a_j}{b_j-a_j}\right)\right)\times \\
&
\int_{[0,1]^s}\mathfrak{f}({\bf z}\times {\bf (b-a)} + {\bf a})\cos(\pi (h_1-k_1)z_1)\prod^s_{j=2}\cos(\pi (h_j-k_j)z_j)d{\bf z}\\
&
+\sum_{{\bf k}_{-1}\in\mathbb{Z}^{s-1}} 
\left(\prod^s_{j=2}\cos\left(\pi k_j \frac{y_j-a_j}{b_j-a_j}\right)\right)
\int_{[0,1]^s}\mathfrak{f}({\bf z}\times {\bf (b-a)} + {\bf a})\cos(\pi h_1z_1)\prod^s_{j=2}\cos(\pi (h_j-k_j)z_j)d{\bf z}\\
&
+ \sum^{-1}_{k_1 = -\infty}\sum_{{\bf k}_{-1}\in\mathbb{Z}^{s-1}} 
\cos\left(\pi k_1 \frac{y_1-a_1}{b_1-a_1}\right)\left(\prod^s_{j=2}\cos\left(\pi k_j \frac{y_j-a_j}{b_j-a_j}\right)\right)\times \\
&
\int_{[0,1]^s}\mathfrak{f}({\bf z}\times {\bf (b-a)} + {\bf a})\cos(\pi (h_1-k_1)z_1)\prod^s_{j=2}\cos(\pi (h_j-k_j)z_j)d{\bf z},
\end{align*}
by using the definition of cosine coefficients.
We separate the sum here in three parts according to whenever $k_1$ is positive, negative or zero. 
Next, we combine the terms whose $|k_1|$ value is the same. 
\begin{align*}
&
\sum_{{\bf k}\in\mathbb{Z}^s}
(\sqrt{2})^{-|{\bf h}-{\bf k}|_0}\widetilde{\mathfrak{f}}_{\cos}({\bf h}-{\bf k})
\left(\prod^s_{j=1}\cos\left(\pi k_j \frac{y_j-a_j}{b_j-a_j}\right)\right)\\
= & 
\sum_{{\bf k}_{-1}\in\mathbb{Z}^{s-1}} 
\left(\prod^s_{j=2}\cos\left(\pi k_j \frac{y_j-a_j}{b_j-a_j}\right)\right)
\int_{[0,1]^s}\mathfrak{f}({\bf z}\times {\bf (b-a)} + {\bf a})\cos(\pi h_1z_1)\prod^s_{j=2}\cos(\pi (h_j-k_j)z_j)d{\bf z}\\
&
+ \sum^\infty_{k_1 = 1}\sum_{{\bf k}_{-1}\in\mathbb{Z}^{s-1}} 
\cos\left(\pi k_1 \frac{y_1-a_1}{b_1-a_1}\right)\left(\prod^s_{j=2}\cos\left(\pi k_j \frac{y_j-a_j}{b_j-a_j}\right)\right)\times \\
&
\int_{[0,1]^s}\mathfrak{f}({\bf z}\times {\bf (b-a)} + {\bf a})(\cos(\pi (h_1+k_1)z_1)+ \cos(\pi (h_1-k_1)z_1)) \prod^s_{j=2}\cos(\pi (h_j-k_j)z_j)d{\bf z}\\
= &
\sum_{k_1 \in \mathbb{N}_0}\sum_{{\bf k}_{-1}\in\mathbb{Z}^{s-1}} 
\left(\prod^s_{j=1}\cos\left(\pi k_j \frac{y_j-a_j}{b_j-a_j}\right)\right)\times \\
&
2^{|k_1|_0} \int_{[0,1]^s}\mathfrak{f}({\bf z}\times {\bf (b-a)} + {\bf a})\cos(\pi h_1 z_1) \cos(\pi k_1 z_1) \prod^s_{j=2}\cos(\pi (h_j-k_j)z_j)d{\bf z}.
\end{align*}

Then we repeat the similar steps of separating terms and combining those with the same absolute value  for $k_2, \cdots, k_s$, which leads to:
\begin{align*}
&
\sum_{{\bf k}\in\mathbb{Z}^s}
(\sqrt{2})^{-|{\bf h}-{\bf k}|_0}\widetilde{\mathfrak{f}}_{\cos}({\bf h}-{\bf k})
\left(\prod^s_{j=1}\cos\left(\pi k_j \frac{y_j-a_j}{b_j-a_j}\right)\right)\\
= & 
\sum_{{\bf k}\in \mathbb{N}^s_0} 
\int_{[0, 1]^s}\mathfrak{f}({\bf z}\times {\bf (b-a)} + {\bf a})
\left(\prod^s_{j=1}\cos(\pi h_j z_j)\right)2^{|{\bf k}|_0/2}\prod^s_{j=1}\cos(\pi k_j z_j)d{\bf z}
2^{|{\bf k}|_0/2}\left(\prod^s_{j=2}\cos\left(\pi k_j \frac{y_j-a_j}{b_j-a_j}\right)\right).
\end{align*}
This can be seen as the half-period cosine expansion of the function $$\mathfrak{f}_{{\bf h}}({\bf y}) :=\mathfrak{f}({\bf y})\left(\prod_{j=1}^{s}\cos \left(\pi h_j \frac{y_j-a_j}{b_j-a_j} \right)\right).$$

Therefore, the right-hand side of Equation \eqref{equation_error_expression} can be rewritten as 
\begin{align*}
&
\int_{\mathbb{R}^s}
\sum_{{\bf h}\in L^\bot\backslash\{{\bf 0}\}}\sum_{{\bf k}\in \mathbb{Z}^s}
(\sqrt{2})^{-|{\bf h}-{\bf k}|_0}\widetilde{f}_{\cos}({\bf h}-{\bf k})
\left(\prod^s_{j=1}\cos \left(\pi k_j \frac{y_j - a_j}{b_j -a_j}\right) \right)
\mu(d{\bf y})\\
=& 
\sum_{{\bf h}\in L^\bot\backslash\{{\bf 0}\}}
\int_{\mathbb{R}^s}
\bar{\mathfrak{f}}_{{\bf h}}({\bf y})
\mu(d{\bf y}).
\end{align*}

To summarize, the error of this approximation is just the sum of functions $\bar{\mathfrak{f}}_{{\bf h}}$ integrated over the probability measure $\mu$ over the dual lattice.
The approximation error is controlled by a series with individual term  $\int_{\mathbb{R}^s}
\bar{\mathfrak{f}}_{{\bf h}}({\bf y})
\mu(d{\bf y})$, a cosine transform with respect to the cosine series of $\mathfrak{f}$ under the probability measure $\mu$. 
The decay rate of such integral, depending on the combined smoothness of the measure $\mu$ and the function $\mathfrak{f}$, is the key to limit the error of our scheme.

\subsection{Cosine Wavelets} \label{section_cosine_wavelets}

We place our work in the context of the construction of wavelets in \cite{chau_2018_wavelet} to see the similarities and differences between the two approaches. 
Instead of using the full Fourier series for the wavelet construction, as in \cite{chau_2018_wavelet}, we choose a ``cosine function only" basis set as our starting point, corresponding to the half-period cosine space we used in Section \ref{section_scheme}. 

Under this setting, we start our derivation from the set $$\Gamma_{a,b}:=\left\{\frac{1}{\sqrt{2}},\left.\cos\left(k\pi\frac{y-a}{b-a}\right)\right| k=1,2,\ldots\right\},$$ with $[a,b]\in\mathbb{R}$ a finite range. 
We define an inner product
\begin{equation*}
<\mathfrak{f},\mathfrak{g}>_{L^2([a,b])}:=\frac{2}{b-a}\int^{b}_{a}\mathfrak{f}(y)\mathfrak{g}(y) dy,
\end{equation*}  
and the corresponding norm $||\cdot||_{L^2([a, b])}$.
We denote any function $\mathfrak{f}$ such that $||\mathfrak{f}||_{L^2([a, b])} < \infty$, as $\mathfrak{f} \in L^2([a, b])$.

Equipped with the above definitions, we construct an approximation space together with a localized basis. Consider the following function $\mathcal{K}^{wl}_{N'}:\mathbb{R} \times [N'] \rightarrow\mathbb{R}$,
\begin{align}
\mathcal{K}^{wl}_{N'} (x, r)
:=& 
\frac{1}{2}
+\sum^{N'-1}_{k=1}\cos\left(k\pi\frac{x-a}{b-a}\right)\cos\left(k\pi\frac{2r-1}{2N'}\right)
\nonumber\\
=& 
\frac{1}{2}
+\frac{1}{2}\sum^{N'-1}_{k=1}\cos\left(
k\pi
\left(\frac{x-a}{b-a}-\frac{2r-1}{2N'}\right)
\right)
+\frac{1}{2}\sum^{N'-1}_{k=1}\cos\left(
k\pi
\left(\frac{x-a}{b-a}+\frac{2r-1}{2N'}\right)
\right)\label{kernel}\\
=& \left\{
\begin{array}{ll}
\frac{N'}{2}&\mbox{ if } \frac{x-a}{b-a} = 2l \pm \frac{2r-1}{2N'} \mbox{ for } l \mbox{ an integer,}\\
\frac{\sin((N'-\frac{1}{2})(\frac{x-a}{b-a}-\frac{2r-1}{2N'})\pi)}{4\sin(\frac{\pi}{2}(\frac{x-a}{b-a}-\frac{2r-1}{2N'}))}
+\frac{\sin((N'-\frac{1}{2})(\frac{x-a}{b-a}+\frac{2r-1}{2N'})\pi)}{4\sin(\frac{\pi}{2}(\frac{x-a}{b-a}+\frac{2r-1}{2N'}))}	& \mbox{ otherwise,}
\end{array}
\right.\nonumber
\end{align}
where $r=1, 2, \ldots, N'$. This definition is a special case of the scaling functions given in Equation (2.13) of \cite{fischer_1997_wavelets}, in which the authors presented an uniform approach for the construction of wavelets based on orthogonal polynomials. 
The properties of $\mathcal{K}^{wl}_{N'}$, that are relevant to our numerical method are listed in the next proposition.
\begin{proposition}
	The function $\mathcal{K}^{wl}_{N'}$, which is defined in Equation (\ref{kernel}), satisfies the following properties:
	\begin{enumerate}
		\item[(a)] The inner product of two scaling functions is given by the following equation:
		\begin{equation*}
		<\mathcal{K}^{wl}_{N'}(\cdot, r),\mathcal{K}^{wl}_{N'}(\cdot, q)> = \mathcal{K}^{wl}_{N'}\left(a+\frac{2r-1}{2N'}(b-a), q\right), \qquad r,q = 1, 2, \ldots, N'.
		\end{equation*}
		Thus, $\{\mathcal{K}^{wl}_{N'}(x, r)|r = 1, \ldots, N'\}$ is an orthogonal set.
		
		\item[(b)] The scaling function $\mathcal{K}^{wl}_{N'}(\cdot, r)$ is localized around $a+\frac{2r-1}{2N'}(b-a)$. By this we mean that for the subspace
		$$V_{N'}:=\mbox{span}\left\{
		\frac{1}{\sqrt{2}},
		\left.
		\cos\left(k\pi\frac{y-a}{b-a}\right)
		\right|
		k = 1, 2, \ldots, N'-1
		\right\},$$
		we have 
		$$\left|\left|
		\frac{\mathcal{K}^{wl}_{N'}(\cdot, r)}{\mathcal{K}^{wl}_{N'}(a+\frac{2r-1}{2N'}(b-a), r)}
		\right|\right|_{L^2([a, b])}
		=\min\left\{||\mathfrak{f}||_{L^2([a,b])}:\mathfrak{f}\in V_{N'}, \mathfrak{f}\left(a+\frac{2r-1}{2N'}(b-a)\right)=1\right\} .$$
		
		\item[(c)] $\{\mathcal{K}^{wl}_{N'}(\cdot, r)|r=1, 2,\ldots,N'\}$  is a basis for $V_{N'}$.
		
		\item[(d)] The scaling function $\mathcal{K}^{wl}_{N'}$ is also a kernel polynomial in the sense that, for any function $v$ in $V_{N'}$, we have $$<v,\mathcal{K}^{wl}_{N'}(\cdot, r)>_{L^2([a,b])} = v\left(a+\frac{2r-1}{2J}(b-a)\right).$$
	\end{enumerate}
\end{proposition}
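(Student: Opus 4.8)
The plan is to recognize $\mathcal{K}^{wl}_{N'}(\cdot, r)$ as the reproducing kernel of the finite-dimensional space $V_{N'}$, evaluated at the node $x_r := a + \frac{2r-1}{2N'}(b-a)$; once this representation is in hand, all four parts reduce to standard orthonormal-expansion arguments. First I would confirm that $\Gamma_{a,b}$ is orthonormal for $<\cdot,\cdot>_{L^2([a,b])}$ via the substitution $u = (y-a)/(b-a)$, which turns each inner product into $2\int_0^1\cos(k\pi u)\cos(m\pi u)\,du = \delta_{km}$ together with $\sqrt{2}\int_0^1\cos(k\pi u)\,du = 0$. Writing $\phi_0 := 1/\sqrt{2}$ and $\phi_k(x) := \cos(k\pi\frac{x-a}{b-a})$ for $k\geq 1$, and noting $\phi_k(x_r) = \cos(k\pi\frac{2r-1}{2N'})$ together with $\phi_0(x)\phi_0(x_r) = 1/2$, the defining formula collapses to $\mathcal{K}^{wl}_{N'}(x,r) = \sum_{k=0}^{N'-1}\phi_k(x)\phi_k(x_r)$, the reproducing kernel of $V_{N'} = \mbox{span}\{\phi_0,\ldots,\phi_{N'-1}\}$.

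With this representation, part (d) is immediate: expanding any $v = \sum_{k} c_k\phi_k\in V_{N'}$ and invoking orthonormality yields $<v,\mathcal{K}^{wl}_{N'}(\cdot,r)> = \sum_k c_k\phi_k(x_r) = v(x_r)$, which is the claimed kernel-polynomial property (reading $J$ as $N'$). Part (a) then follows by applying (d) to the function $v = \mathcal{K}^{wl}_{N'}(\cdot,q)$, itself a member of $V_{N'}$, giving $<\mathcal{K}^{wl}_{N'}(\cdot,r),\mathcal{K}^{wl}_{N'}(\cdot,q)> = \mathcal{K}^{wl}_{N'}(x_r,q)$. The orthogonality assertion — that this vanishes when $r\neq q$ — is the one genuinely computational step. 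Here I would apply product-to-sum to $\phi_k(x_r)\phi_k(x_q) = \cos(k\theta_r)\cos(k\theta_q)$ with $\theta_r := \frac{(2r-1)\pi}{2N'}$, and evaluate the two resulting Dirichlet kernels $\frac{1}{2} + \sum_{k=1}^{N'-1}\cos(k\alpha) = \frac{\sin((N'-\frac12)\alpha)}{2\sin(\alpha/2)}$ at $\alpha = \theta_r - \theta_q = \frac{(r-q)\pi}{N'}$ and $\alpha = \theta_r + \theta_q = \frac{(r+q-1)\pi}{N'}$. Since both frequencies are integer multiples of $\pi/N'$, the two half-contributions evaluate to $-(-1)^{r+q}/4$ and $(-1)^{r+q}/4$ and cancel exactly.

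Finally, parts (c) and (b) drop out as corollaries. For (c), the $N'$ functions $\mathcal{K}^{wl}_{N'}(\cdot,r)$ all lie in the $N'$-dimensional space $V_{N'}$, are pairwise orthogonal by (a), and each has strictly positive squared norm $\mathcal{K}^{wl}_{N'}(x_r,r) = \frac12 + \sum_{k=1}^{N'-1}\cos^2(k\theta_r) \geq \frac12$, hence they are linearly independent and form a basis. For (b), I would use the classical reproducing-kernel minimization: any $\mathfrak{f}\in V_{N'}$ with $\mathfrak{f}(x_r)=1$ satisfies $<\mathfrak{f},\mathcal{K}^{wl}_{N'}(\cdot,r)> = 1$ by (d), so Cauchy--Schwarz forces $||\mathfrak{f}|| \geq 1/||\mathcal{K}^{wl}_{N'}(\cdot,r)||$ with equality precisely for scalar multiples of the kernel; the unique multiple respecting $\mathfrak{f}(x_r)=1$ is exactly $\mathcal{K}^{wl}_{N'}(\cdot,r)/\mathcal{K}^{wl}_{N'}(x_r,r)$, whose norm equals $1/\sqrt{\mathcal{K}^{wl}_{N'}(x_r,r)}$ by (a). I expect the main obstacle to be the discrete-orthogonality computation in (a): beyond the Dirichlet-kernel algebra, one must check that the sine denominators $\sin((\theta_r\mp\theta_q)/2)$ never vanish, which holds because $|r-q|\leq N'-1$ and $1\leq r+q-1\leq 2N'-1$ both stay strictly inside $(0,2N')$ for distinct $r,q\in\{1,\ldots,N'\}$.
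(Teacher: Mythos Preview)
Your proposal is correct and complete. The paper itself does not supply a proof of this proposition; it simply refers the reader to \cite{fischer_1997_wavelets} for the general theory and to Theorem 3.1 of \cite{chau_2018_wavelet} for a condensed derivation, so your argument is in fact more detailed than anything in the present paper.

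Your strategy---recognising $\mathcal{K}^{wl}_{N'}(\cdot,r)=\sum_{k=0}^{N'-1}\phi_k(\cdot)\phi_k(x_r)$ as the reproducing kernel of the finite-dimensional space $V_{N'}$ with respect to the orthonormal basis $\{\phi_k\}$, deducing (d) first, then (a), (c), (b) as corollaries---is exactly the standard route taken in the orthogonal-polynomial wavelet literature that the paper cites. The one nontrivial step, the discrete orthogonality $\mathcal{K}^{wl}_{N'}(x_r,q)=0$ for $r\neq q$, you handle correctly via the Dirichlet-kernel identity at the Chebyshev-type angles $\theta_r=\frac{(2r-1)\pi}{2N'}$, including the necessary check that the sine denominators do not vanish. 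One minor sharpening: from the closed form already recorded in the paper's Equation~(\ref{kernel}) you can read off directly that $\mathcal{K}^{wl}_{N'}(x_r,r)=N'/2$, which is slightly cleaner than the lower bound $\geq 1/2$ you use for (c), though of course either suffices.
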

Readers are referred to \cite{fischer_1997_wavelets} for further properties of such functions and for a condensed proof of the above properties one may follow the derivation of Theorem 3.1 in \cite{chau_2018_wavelet}.

Applying the results obtained above, we may define the wavelet expansion $\mathfrak{f}_{wl}: \mathbb{R} \rightarrow \mathbb{R}$ for any function $\mathfrak{f} \in L^2([a, b])$ by $$\mathfrak{f}_{wl}(y) := \frac{2}{N'}\sum^{N'}_{r=1}\mathfrak{f}\left(a+\frac{2r-1}{2N'}(b-a)\right)\mathcal{K}^{wl}_{N'}(y, r),$$ and approximate the expectation $\mathbb{E}[\mathfrak{f}(Y)]$ by,
\begin{align}
	\mathbb{E}[\mathfrak{f}(Y)]  
	\approx & 
	\mathbb{E}\left[
		\frac{2}{N'}
		\sum^{N'}_{r=1}\mathfrak{f}\left(a+\frac{2r-1}{2N'}(b-a)\right)\mathcal{K}^{wl}_{N'}(Y, r)
	\right] \nonumber\\
	= &
	\frac{2}{N}\sum^{N'}_{r=1}\mathfrak{f}\left(a+\frac{2r-1}{2J}(b-a)\right)
	\mathbb{E}\left[\frac{1}{2}
	+\sum^{N'-1}_{k=1}\cos\left(k\pi\frac{Y-a}{b-a}\right)\cos\left(k\pi\frac{2r-1}{2N'}\right)\right]\nonumber\\
	= &
	\frac{1}{N'}\sum^{N'}_{r=1}\mathfrak{f}\left(a+\frac{2r-1}{2J}(b-a)\right)
	\mathbb{E}\left[1
	+\sum^{N'-1}_{k=1}(e^{\imath k\pi\frac{Y-a}{b-a}} + e^{-\imath k\pi\frac{Y-a}{b-a}})\cos\left(k\pi\frac{2r-1}{2N'}\right)\right]\nonumber\\
	= &
	\frac{1}{N'}\sum^{N'}_{r=1}\mathfrak{f}\left(a+\frac{2r-1}{2J}(b-a)\right)
	\sum^{N'-1}_{k=1-N'}\cos\left(k\pi\frac{2r-1}{2N'}\right)e^{-\imath \pi k\frac{a}{b-a}} \mathbb{E}\left[e^{\imath \frac{k\pi}{b-a} Y} \right].\nonumber
\end{align}

Comparing this expression with Equation \eqref{equation_cosine_full_scheme} when $s = 1$, it is clear that these two formulas are of the same form. 
The main difference is that the wavelet formula can be seen as an expansion scheme using the lattice points 
$$P_{wl}(N', [a, b]) := \left\{a + n\frac{2r-1}{2N'}(b-a): 1 \leq r \leq N'\right\},$$
with the kernel also bounded at the corresponding value $N'-1$. 

Considering a two-dimensional function $\mathfrak{f}: \mathbb{R}^2 \rightarrow \mathbb{R}$, we can extend the cosine wavelet to two dimensions by applying the expansion to each dimension separately. 
\begin{align*}
&
\mathbb{E}[\mathfrak{f}(Y_1, Y_2)]\\
= &
\mathbb{E}\left[
\frac{2}{N'}\sum^{N'}_{r_1 = 1}\mathfrak{f}\left(a_1 + \frac{2r_1 -1}{2N'}(b_1-a_1), Y_2\right) \mathcal{K}^{wl}_{N'}(Y_1, r_1)
\right]\\
= & 
\mathbb{E}\left[
\frac{4}{{N'}^2}\sum^{N'}_{r_1 = 1}\sum^{N'}_{r_2 = 1}\mathfrak{f}\left(a_1 + \frac{2r_1 -1}{2N'}(b_1-a_1), a_2 + \frac{2r_2-1}{2N'}(b_2-a_2)\right) \mathcal{K}^{wl}_{N'}(Y_1, r_1)\mathcal{K}^{wl}_{N'}(Y_2, r_2)
\right]\\
= &
\frac{1}{{N'}^2}\sum^{N'}_{r_1 = 1}\sum^{N'}_{r_2 = 1}\mathfrak{f}\left(a_1 + \frac{2r_1 -1}{2N'}(b_1-a_1), a_2 + \frac{2r_2-1}{2N'}(b_2-a_2)\right) \times \\
&
\sum^{N'-1}_{k_1 = 1-N'}\sum^{N'-1}_{k_2 = 1-N'}\left(\prod_{j = 1}^s\cos \left(k_j \pi\frac{2r_j-1}{2N'}\right)\right) e^{-\imath \pi {\bf k}\cdot {\bf \frac{a}{b-a}}}\mathbb{E}\left[	e^{\imath \frac{{\bf k}\pi}{\bf b-a}\cdot {\bf Y}}\right].
\end{align*} 

Again this is in a similar form as in Equation \eqref{equation_cosine_full_scheme}. 
However, the total number of terms ${N'}^2$ increases with the dimension and there is no clear way to select the more significant ones. 

One of the key advantages of the cosine expansion lattice scheme is that we have removed the link between the quadrature points and the number of summation terms $N$. 
This allows us to apply an online/offline construction in our algorithm.
We can first compute the expectations of reproducing kernels $\mathbb{E}\left[
\mathcal{K}^{\cos}_{s, K}\left({\bf p}_{\boldsymbol{\phi}, n}, {\bf Y}\right)\right]$ for all points in the lattice sequence and use the stored values in the approximation scheme. 
This construction is not feasible in the cosine wavelets setting and it is one of the advancements from the cosine expansion lattice scheme.  

\section{Numerical Experiments} \label{section_numerical_experiment}

In this section we perform numerical experiments to test the cosine expansion lattice scheme. 

In order to demonstrate that our scheme can inherit the results from the previous literature, we use the lattice sequence from \cite{hickernell_2012_weighted} and perform the tent-transformation on them in all our tests. 
This sequence has been used in the numerical results section of \cite{dick_2014_lattice} and readers are referred to the references therein for further information. 

All figures displaying numerical results in this Section present the log absolute error $\log_{10}(|\epsilon|)$ against the log number of lattice points $\log_{10}(N)$.

For the first two experiments, we compute the expectation with the following test function: $$\mathfrak{f}^1_{s, w}({\bf y}):= \prod_{j=1}^{s}\left(1+\frac{w^j}{21}(-10+42y^2_j - 42y^5_j +21 y^6_j)\right).$$

\subsection{Uniform Distribution}

Whenever the reproducing kernel's expectation $\mathbb{E}[\mathcal{K}^{\cos}_{s} \left( {\bf p}_{\boldsymbol{\phi}, n}, {\bf Y }\right)]$ is known explicitly, we can simplify the algorithm to its original form in Equation \eqref{equation_quadrature_rule} instead of the full scheme in Equation \eqref{equation_cosine_full_scheme}. 
This results in an algorithm that has no difference to the original QMC rule in terms of computational complexity.
As stated in Remark \ref{remark_uniform_distribution}, this is indeed the case for the uniform distribution.

In this subsection, we approximate $$\mathbb{E}[\mathfrak{f}^1_{s, w}({\bf Y})],$$ where ${\bf Y}$ follows an uniform distribution on the domain $[0, 1]\times [-1, 1] \times [0, 1] \times [-1, 1]\times \cdots$ for dimension $s \in [8]$.
The reference value is $\prod_{j=1}^s\left(2 + w^j \frac{2}{3}\right)^{(j\mod 2)}$. 

Note that we are not establishing a new numerical result or proof of convergence here as this problem can be solved by the original QMC algorithm through a change of variables. 
These tests serve two purposes. 
They establish the base line result as a comparison to the results in the next section and allow us to provide comments on the actual implementation of the QMC algorithm. 

The result for $w = 0.5$ can be seen in Figure \ref{figure_uniform}.
All tests use $2^{20}$ evaluations for their final result.  
The reader should note that instead of having 1 as the common reference value throughout all dimensions, as in \cite{dick_2014_lattice}, the reference values of our setting follow those in Table \ref{table_reference_uniform}. 
This is by design to demonstrate some key properties of lattice expansions. 
Under this setting, results from neighboring dimensions form a pair, for example, dimensions 4 and 5 share the same reference value. 
As demonstrated in Figure \ref{figure_uniform}, the absolute errors converge to similar limits for these pairs, while the results worsen when the reference increases.
This demonstrates that the QMC rules can be generalized to higher dimensions, but their error depends on the value of the target integration, which fits the result from Theorem \ref{theorem_lattice}. 

Finally, it should be noted that the result for dimension 1 is particular strong. 
This is because the function evaluation for $y_j \in [0,1)$, the monomial $y^k_j$ would always be around zero which helps stabilizing the function evaluation.
When we use a projection domain that is larger than the unit box when approximating a polynomial, as we do in the upcoming examples, the results may seem to be worse than standard QMC result and this is one of the contributing factors.  

\begin{table}[t!]
	\centering
	\begin{tabular}{|c|c|c|c|c|c|c|c|c|}
		\hline
		Dimension & 1 & 2 & 3 & 4 & 5 & 6 & 7 & 8 \\
		\hline
		Ref. Values & 1 & 2.167 & 2.167 & 4.424 & 4.424 & 8.893 & 8.893 & 17.81\\
		\hline
	\end{tabular}
	\caption{Reference solution for Uniform Distribution} \label{table_reference_uniform}
\end{table}

\begin{figure}[t!]
	\includegraphics{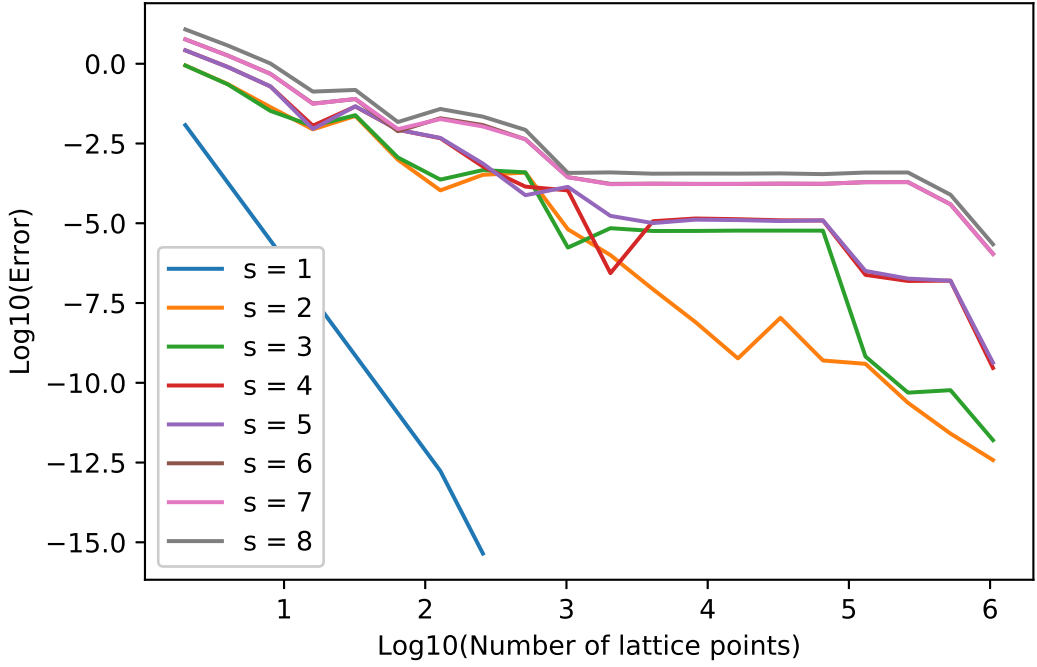}
	\caption{Absolute error verses the number of lattice points for uniform distribution. }\label{figure_uniform}
\end{figure}

\subsection{Normal Distribution}

In this subsection, we approximate $$\mathbb{E}[\mathfrak{f}^1_{s, 0.9}({\bf Y})],$$ where ${\bf Y}$ follows a multivariate normal distribution with mean $0$ for all dimensions and covariance matrix $$\left(\begin{array}{ccc}
0.5^2 & &\\
& \ddots & \\
& & 0.5^2
\end{array}\right).$$ 
Further, we restrict our lattice to the box $[-4.5, 4.5]^s$ and we study the cases of dimension $s \in [3]$.
We applied the full scheme in Equation \eqref{equation_cosine_full_scheme} here with $K = 2^7$.
The reference values listed in vector form are $(1.2324, 1.4901, 1.7705)$.

The convergence result can be seen in Figure \ref{figure_normal}.
All tests use $2^{18}$ evaluations for their final result.  
The approximation value converges to the references value in all tests, as we would expect.
We can observe the structure of the error plateauing for a while with respect to the number of lattice points and suddenly improving when a critical number is reached in Figure \ref{figure_normal}. 
This seems to be related to the lattice sequence.

\begin{figure}[t!]
	\includegraphics{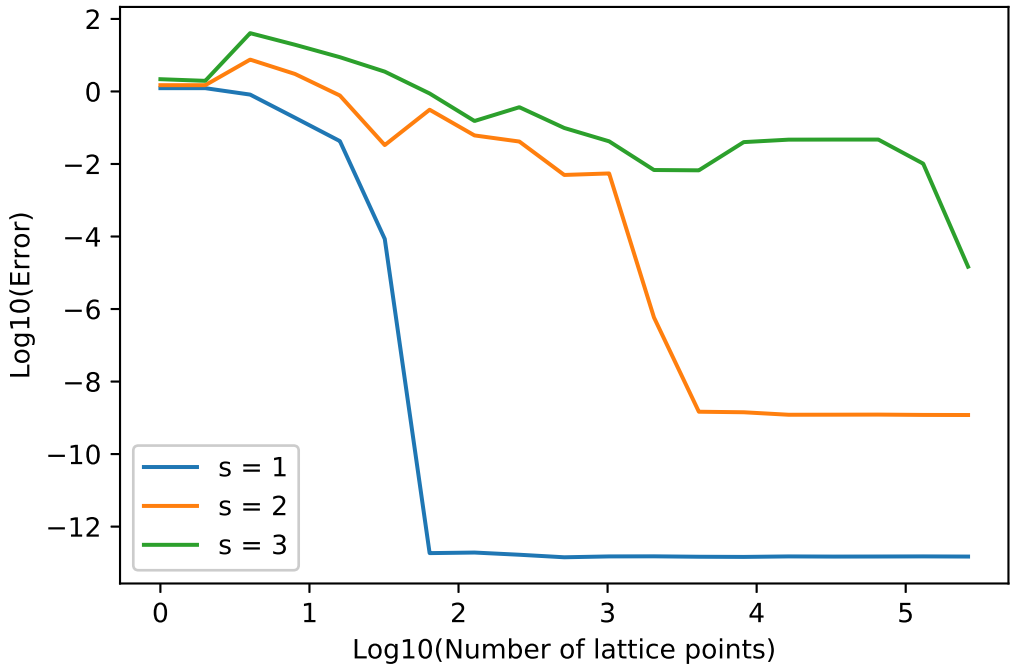}
	\caption{Absolute error verses the number of lattice points for normal distribution. }\label{figure_normal}
\end{figure}

In the case of the normal distribution, there is a final level of error for each dimension, where increasing the number of lattice points further would not improve the approximation result. 
This is related to the error of projecting a non-periodic function to the half-period cosine space and the truncation of reproducing kernel. 
These errors dominate when the lattice rule error has converged and limit the final result.

\subsubsection{Further studies on parameters}

With the successful implementation of our scheme, we conducted further tests for the selection of the kernel truncation parameter and projection domain.  
We did not consider the generation and construction of different lattice sequences in this article considering the large amount of previous research and work on this topic.
It is difficult to draw a satisfying picture on this topic from a simple test.
However, we would further consider the choices of projection domain and kernel truncation limit $K$ and their impact on the approximation error. 
In the following tests, we focus on the 2 dimensional case.

\begin{figure}[t!]
	\includegraphics{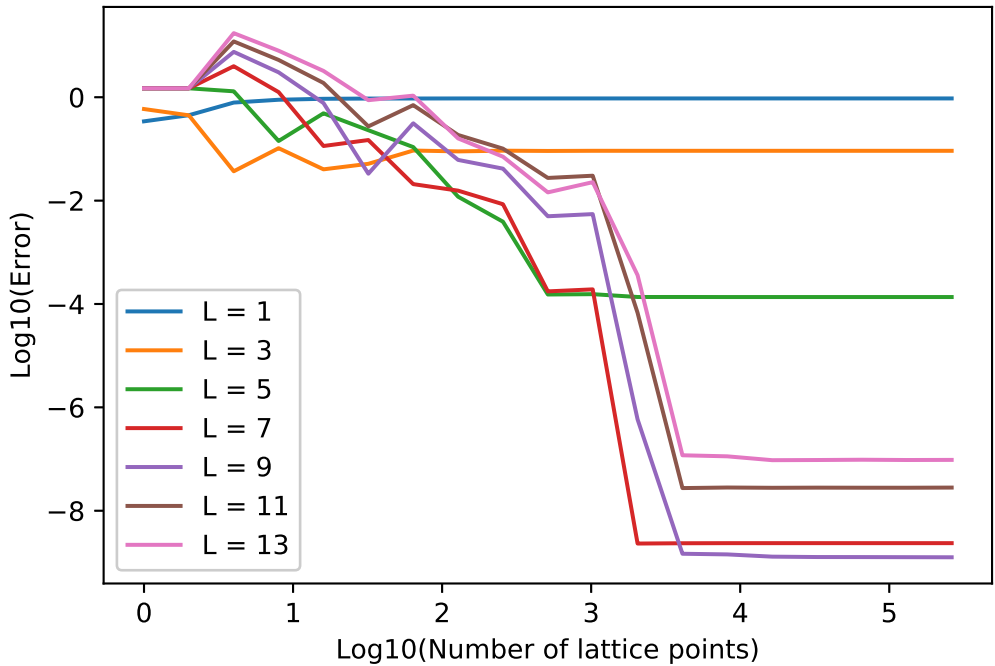}
	\caption{Absolute error verses the number of lattice points for the normal distribution for different projection domains.}\label{figure_normal_boundary}
\end{figure}

In Figure \ref{figure_normal_boundary}, we see the result of the same test we used for the normal distribution but instead of changing the test dimension $s$, we fixed the test dimension to 2 and varied the projection domain through a variable $L$. 
The projection domain is defined as $[-0.5L, 0.5L]^2$ and the kernel truncation $K$ is fixed at $2^7$. 

From the result, we can see that the convergence behavior is similar among all tests for $L \geq 5$.
Our result implies that the lattice sequence has the most significant influence on the convergence behavior with respect to the number of lattice points of our scheme. 
When most of the probability mass is included in the projection range ($L \geq 5$ here), the projection range only affects the final error. 
As one can see, the error is the lowest when $L = 9$, which can possibly be explained by the expression in Equation \eqref{equation_total_error_bound}. 
The first term at the right hand side decreases when the probability mass outside of the projection range increases, while the rest of the terms relate to the averages of the integrand and increases with the projection range.
We have to find the balance for these two competing effects when we choose the projection range.
Detailed results can be seen in the appendix.

\begin{figure}[t!]
	\includegraphics{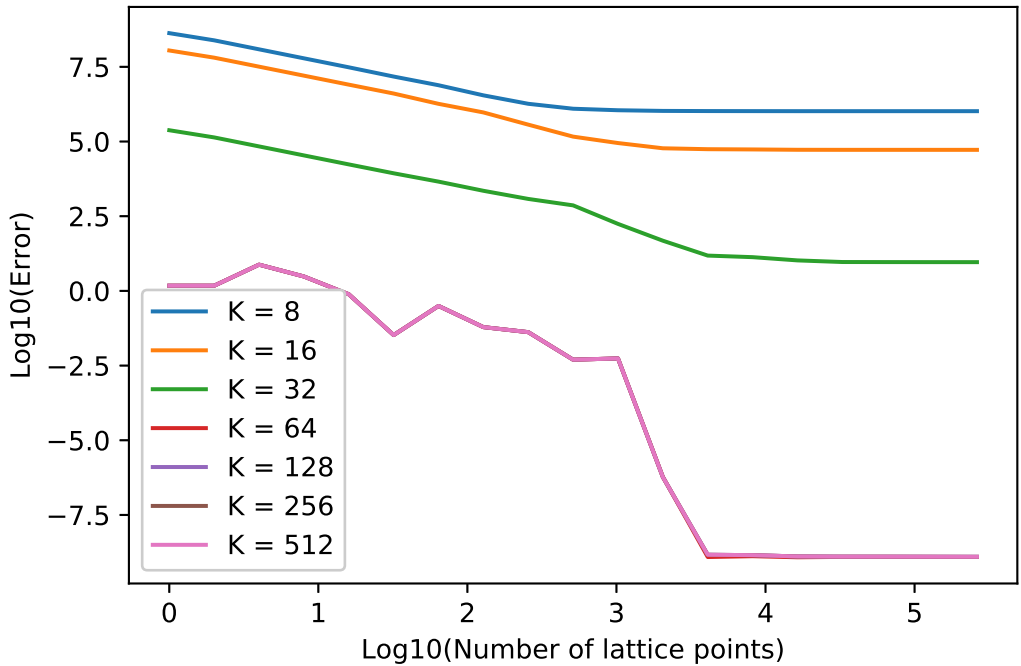}
	\caption{Absolute error verses the number of lattice points for the normal distribution with varying $K$. }\label{figure_normal_kernel_bound}
\end{figure}

Next, we can see the result of changing the kernel truncation factor $K$ in Figure \ref{figure_normal_kernel_bound}.
The projection domain is fixed as $[-4.5, 4.5]^2$.
Once again, the approximation converges to the final limit when the number of lattice points reaches $2^{12}$.
This again shows that the kernel truncation is only related to the final limit of approximation but not to the convergence behavior.

Another point of interest for us is the exponential decay of the Fourier transforms (the building blocks of the reproducing kernel).
When $K \geq 64$, including more terms in the reproducing kernel does not improve the approximation. 
This was one of the main advantages of the COS method, making use of the smoothness of the probability measure to achieve better convergence results. 
It seems that we can arrive at similar results for this combined scheme.

However, the construction for the truncated kernel, defined in Equation \eqref{equation_truncated_kernel}, still suffers from the curse of dimensionality.
Under our current setting, this issue can be addressed through software engineering by making use of an online/offline construction, in which we calculate and store the reproducing kernel in an offline stage and then use the stored results in actual calculation. 
Parallel computing or GPU computing are also possible means to speed up the computation.

We believe that this scheme is valuable for specifically making use of the smoothness of the probability measure and the clear separation of the measure from the integrand, which implies we may switch one part without affecting the other.
Possible remedies to be studied for this scheme may include hyperbolic cross or non-linear basis constructions for the truncated kernel, or alternatively, different means for approximating the reproducing kernel. 

To conclude, our additional parameter of kernel truncation and domain projection does not change the convergence behavior but only affects the final approximation error when it is sufficiently large. 
However, the kernel construction still gives rise to the curse of dimensionality.

\subsection{Asymmetric Multivariate Laplace Distribution}

Finally, we implement our scheme to a typical asymmetric 2-dimensional Laplace distribution ${\bf Y}$, whose character function is given by $$\mathcal{F}_{Lap}({\bf k}) = \frac{1}{1 + 0.5 {\bf k}^\top \boldsymbol{\Sigma}{\bf k} - \imath \bar{\boldsymbol{\mu}}^\top {\bf k}},$$ in this subsection.
In our setting, $\bar{\boldsymbol{\mu}}$ is a 2-dimensional vector given by $(0.3, -0.1)^\top$, while $\boldsymbol{\Sigma}$ is a $2 \time 2 $ matrix defined as 
$$\left(\begin{array}{cc}
0.25 & -0.15 \\ -0.15 & 0.75
\end{array}\right).$$
The mean and covariance of this distribution are given by $\bar{\boldsymbol{\mu}}$ and $\boldsymbol{\Sigma} + \bar{\boldsymbol{\mu}}\bar{\boldsymbol{\mu}}^\top$, respectively.
For further information, readers are referred to \cite{kozubowski_2013}.
In this test, we compare the algorithm in Section \ref{section_scheme} and the extended COS wavelet scheme in Section \ref{section_cosine_wavelets}.

In this example, the integrand is simply $Y_1Y_2$ and the analytic solution for the expectation is $\bar{\mu}_1\bar{\mu_2} + (\boldsymbol{\Sigma} + \bar{\boldsymbol{\mu}}\bar{\boldsymbol{\mu}}^\top)_{1, 2} = -0.21$. 

We pick the projection domain $[\bar{\mu}_1 - 20 \Sigma_{1, 1}, \bar{\mu}_1 + 20 \Sigma_{1, 1}] \times [\bar{\mu}_2 - 20 \Sigma_{2, 2}, \bar{\mu}_2 + 20 \Sigma_{2, 2}]$ and set the kernel truncation parameter $K$ as $2^6$ for the cosine expansion lattice scheme. 
For the COS wavelet scheme, the order of the kernel and the number of lattice points are linked. 
Therefore, when we use $N$ lattice points for the lattice scheme, we pick $N'= \left\lceil\sqrt{N}\right\rceil$ for the respective wavelet test. 
We report in Figure \ref{figure_laplace_distribution}. 

\begin{figure}[t!]
	\includegraphics{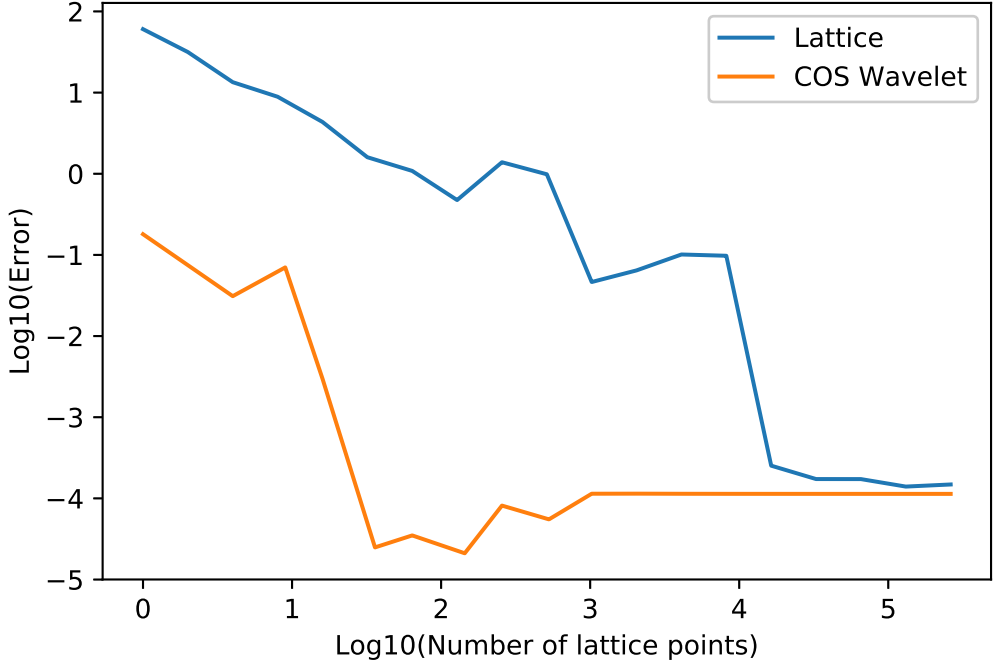}
	\caption{Absolute error verses the number of lattice points for Laplace distribution.}\label{figure_laplace_distribution}
\end{figure}

It is clear that both schemes converge to the same limit.
The approximation error is dominated by the measure truncation error when the number of lattice points is high enough and therefore they have similar end results.
Comparing both schemes, it seems that the COS wavelet performs better with a similar number of lattice points at the early stage.
This may suggest that its quadrature point distribution is more efficient.
The difference in quadrature points for the two schemes can be seen in Figure \ref{figure_point_distribution}.

\begin{figure}[t!]
	\includegraphics{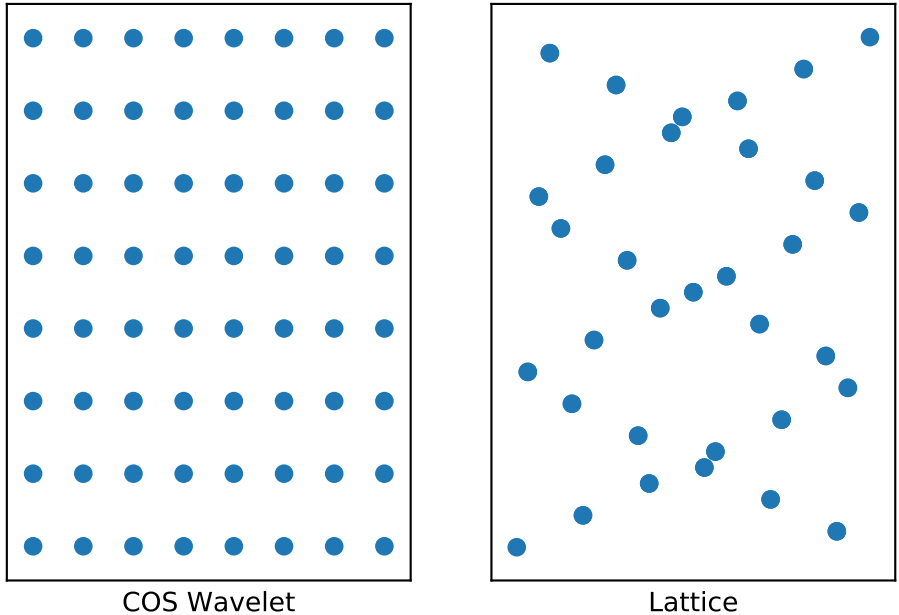}
	\caption{The quadrature points distribution for COS wavelet(left) and lattice sequence(right).}\label{figure_point_distribution}
\end{figure}

However, there are still some shortcomings in the construction of the COS wavelet that makes it a less attractive option comparing to the lattice scheme.
For example, the COS wavelet is not extendable in the sense that we cannot simply add an extra evaluation point to improve the approximation. 
A new evaluation from the beginning is required in such case.

Nevertheless, this test suggests that it would be of great interest to implement other quadrature rules with the half-cosine reproducing kernel.

Another point of interest is that in fact the cosine transform decay rate $\beta =2$ for this example, so the rough error bound for the last term in Equation \ref{equation_total_error_bound} fails to converge. 
However, the success in applying our algorithm clearly shows that we underestimate the decay rate of the cosine transform and a tighter error bound may be derived from advanced results in Fourier analysis.
This motivates us to derive better error bound for our scheme.

\section{Conclusion} \label{section_conclusion}

Equipped with the Fourier-cosine projection technique, we extended the lattice sequence from \cite{hickernell_2012_weighted} to the broader space of probability measures (instead of just the uniform distribution in \cite{dick_2014_lattice}).
The resulting approximation is reproducible and, in theory, this scheme can be generalized to higher-dimensional space by the extension of the results from the previous lattice literature.

The fact that our scheme concentrates all the information from the probability measure into the reproducing kernel and simply evaluates the integrand at some preset quadrature points gives us flexibility. 
We may use the same set of quadrature points for different probability measures as our derivation is not measure specific. Moreover, our scheme can be adjusted and possibly improved by simply replacing the lattice sequence by another one. 

The main remaining issue is the approximation of the reproducing kernel which still suffers from the curse of dimensionality. However, with the rapid decay of the Fourier transform and the specific form of the reproducing kernel, the authors believe that further research on this topic would be fruitful.

\bibliographystyle{abbrv}
\bibliography{../library.bib} 

\begin{thebibliography}{10}

\bibitem{Aronszain_1950_theory}
N.~Aronszajn.
\newblock Theory of reproducing kernels.
\newblock {\em Transactions of the American Mathematical Society},
  68(3):337--404, 1950.

\bibitem{borovykh_2018_efficient}
A.~Borovykh, A.~Pascucci, and C.~W. Oosterlee.
\newblock Efficient computation of various valuation adjustments under local
  {L}\'evy models.
\newblock {\em SIAM Journal on Financial Mathematics}, 9(1):251--273, 2018.

\bibitem{chau_2018_wavelet}
K.~W. Chau and C.~W. Oosterlee.
\newblock {On the wavelet-based SWIFT method for backward stochastic
  differential equations}.
\newblock {\em IMA Journal of Numerical Analysis}, 38(2):1051--1083, 04 2018.

\bibitem{chau_2015_ruin}
K.~W. Chau, S.~C.~P. Yam, and H.~Yang.
\newblock Fourier-cosine method for ruin probabilities.
\newblock {\em Journal of Computational and Applied Mathematics}, 281:94 --
  106, 2015.

\bibitem{cools_2006_constructing}
R.~Cools, F.~Y. Kuo, and D.~Nuyens.
\newblock Constructing embedded lattice rules for multivariate integration.
\newblock {\em SIAM Journal on Scientific Computing}, 28(6):2162--2188, 2006.

\bibitem{dick_2004_on}
J.~Dick.
\newblock On the convergence rate of the component-by-component construction of
  good lattice rules.
\newblock {\em Journal of Complexity}, 20(4):493 -- 522, 2004.

\bibitem{dick_2014_lattice}
J.~Dick, D.~Nuyens, and F.~Pillichshammer.
\newblock Lattice rules for nonperiodic smooth integrands.
\newblock {\em Numerische Mathematik}, 126(2):259--291, Feb 2014.

\bibitem{dick_2010_digital}
J.~Dick and F.~Pillichshammer.
\newblock {\em Digital Nets and Sequences: Discrepancy Theory and Quasi–Monte
  Carlo Integration}.
\newblock Cambridge University Press, 2010.

\bibitem{dick_2008_construction}
J.~Dick, F.~Pillichshammer, and B.~J. Waterhouse.
\newblock The construction of good extensible rank-1 lattices.
\newblock {\em Mathematics of Computation}, 77(264):2345--2373, 2008.

\bibitem{fang_2009_novel}
F.~Fang and C.~W. Oosterlee.
\newblock A novel pricing method for {E}uropean options based on
  {F}ourier-cosine series expansions.
\newblock {\em SIAM Journal on Scientific Computing}, 31(2):826--848, 2009.

\bibitem{fang_2009_pricing}
F.~Fang and C.~W. Oosterlee.
\newblock Pricing early-exercise and discrete barrier options by
  {F}ourier-cosine series expansions.
\newblock {\em Numerische Mathematik}, 114(1):27--62, Aug 2009.

\bibitem{fischer_1997_wavelets}
B.~Fischer and J.~Prestin.
\newblock Wavelets based on orthogonal polynomials.
\newblock {\em Mathematics of Computation}, 66(220):1593--1618, 1997.

\bibitem{hickernell_2012_weighted}
F.~J. Hickernell, P.~Kritzer, F.~Y. Kuo, and D.~Nuyens.
\newblock Weighted compound integration rules with higher order convergence for
  all {N}.
\newblock {\em Numerical Algorithms}, 59(2):161--183, Feb 2012.

\bibitem{hickernell_2003_existence}
F.~J. Hickernell and H.~Niederreiter.
\newblock The existence of good extensible rank-1 lattices.
\newblock {\em Journal of Complexity}, 19(3):286 -- 300, 2003.
\newblock Oberwolfach Special Issue.

\bibitem{kozubowski_2013}
T.~J. Kozubowski, K.~Podg\'orski, and I.~Rychlik.
\newblock Multivariate generalized {L}aplace distribution and related random
  fields.
\newblock {\em Journal of Multivariate Analysis}, 113:59 -- 72, 2013.
\newblock Special Issue on Multivariate Distribution Theory in Memory of Samuel
  Kotz.

\bibitem{kuo_2002_component}
F.~Y. Kuo and S.~Joe.
\newblock Component-by-component construction of good lattice rules with a
  composite number of points.
\newblock {\em Journal of Complexity}, 18(4):943 -- 976, 2002.

\bibitem{lisi_2007_some}
M.~Lisi.
\newblock Some remarks on the {C}antor pairing function.
\newblock {\em Le Matematiche}, 62(1):55--65, Dec 2007.

\bibitem{niedereriter_1992_random}
H.~Niederreiter.
\newblock {\em Random Number Generation and Quasi-Monte Carlo Methods}.
\newblock Society for Industrial and Applied Mathematics, 1992.

\bibitem{nuyens_2006_fast}
D.~Nuyens and R.~Cools.
\newblock Fast algorithms for component-by-component construction of rank-1
  lattice rules in shift-invariant reproducing kernel {H}ilbert spaces.
\newblock {\em Mathematics of Computation}, 75(254):903--920, 2006.

\bibitem{nuyens_2006_fast_non_prime}
D.~Nuyens and R.~Cools.
\newblock Fast component-by-component construction of rank-1 lattice rules with
  a non-prime number of points.
\newblock {\em Journal of Complexity}, 22(1):4 -- 28, 2006.
\newblock Special Issue.

\bibitem{ruijter_2012_two}
M.~Ruijter and C.~W. Oosterlee.
\newblock Two-dimensional fourier cosine series expansion method for pricing
  financial options.
\newblock {\em SIAM Journal on Scientific Computing}, 34(5):B642--B671, 2012.

\bibitem{ruijter_2015_fourier}
M.~Ruijter and C.~W. Oosterlee.
\newblock A {F}ourier cosine method for an efficient computation of solutions
  to {BSDEs}.
\newblock {\em SIAM Journal on Scientific Computing}, 37(2):A859--A889, 2015.

\bibitem{sloan_2002_on}
I.~H. Sloan, F.~Y. Kuo, and S.~Joe.
\newblock On the step-by-step construction of quasi-monte carlo integration
  rules that achieve strong tractability error bounds in weighted {S}obolev
  spaces.
\newblock {\em Mathematics of Computation}, 71(240):1609--1640, 2002.

\bibitem{sloan_2002_component}
I.~H. Sloan and A.~V. Reztsov.
\newblock Component-by-component construction of good lattice rules.
\newblock {\em Math. Comput.}, 71(237):263--273, Jan. 2002.

\end{thebibliography}

\appendix

\section{Detail results for some numerical tests}

\subsection{Results for projection domain test}
\begin{tabular}{|*{7}{c|}}
	\hline
	\diagbox{L}{Lattice} & $2^{0}$ & $2^{1}$ & $2^{2}$ & $2^{3}$ & $2^{4}$ & $2^{5}$\\
	\hline
	1 & 3.420E-01 & 4.493E-01 & 7.891E-01 & 8.948E-01 & 9.336E-01 & 9.432E-01\\
	3 & 5.916E-01 & 4.472E-01 & 3.695E-02 & 1.034E-01 & 4.029E-02 & 5.155E-02\\
	5 & 1.490E+00 & 1.490E+00 & 1.303E+00 & 1.424E-01 & 4.891E-01 & 2.301E-01\\
	7 & 1.490E+00 & 1.490E+00 & 3.985E+00 & 1.257E+00 & 1.140E-01 & 1.487E-01\\
	9 & 1.490E+00 & 1.490E+00 & 7.560E+00 & 3.035E+00 & 7.726E-01 & 3.318E-02\\
	11 & 1.490E+00 & 1.490E+00 & 1.203E+01 & 5.270E+00 & 1.890E+00 & 2.730E-01\\
	13 & 1.491E+00 & 1.490E+00 & 1.739E+01 & 7.951E+00 & 3.231E+00 & 8.776E-01\\
	\hline
	\diagbox{L}{Lattice} & $2^{6}$ & $2^{7}$ & $2^{8}$ & $2^{9}$ & $2^{10}$ & $2^{11}$\\
	\hline
	1 & 9.459E-01 & 9.481E-01 & 9.484E-01 & 9.486E-01 & 9.486E-01 & 9.486E-01\\
	3 & 9.309E-02 & 8.966E-02 & 9.249E-02 & 9.144E-02 & 9.206E-02 & 9.201E-02\\
	5 & 1.087E-01 & 1.191E-02 & 3.909E-03 & 1.526E-04 & 1.549E-04 & 1.366E-04\\
	7 & 2.085E-02 & 1.559E-02 & 8.520E-03 & 1.754E-04 & 1.919E-04 & 2.317E-09\\
	9 & 3.135E-01 & 6.133E-02 & 4.157E-02 & 4.981E-03 & 5.486E-03 & 5.855E-07\\
	11 & 7.037E-01 & 1.863E-01 & 1.013E-01 & 2.747E-02 & 3.034E-02 & 6.746E-05\\
	13 & 1.074E+00 & 1.597E-01 & 7.080E-02 & 1.443E-02 & 2.276E-02 & 3.608E-04\\
	\hline
	\diagbox{L}{Lattice} & $2^{12}$ & $2^{13}$ & $2^{14}$ & $2^{15}$ & $2^{16}$ & $2^{17}$\\
	\hline
	1 & 9.486E-01 & 9.486E-01 & 9.486E-01 & 9.486E-01 & 9.486E-01 & 9.486E-01\\
	3 & 9.202E-02 & 9.200E-02 & 9.201E-02 & 9.201E-02 & 9.201E-02 & 9.201E-02\\
	5 & 1.366E-04 & 1.363E-04 & 1.363E-04 & 1.363E-04 & 1.363E-04 & 1.363E-04\\
	7 & 2.348E-09 & 2.357E-09 & 2.359E-09 & 2.359E-09 & 2.358E-09 & 2.359E-09\\
	9 & 1.470E-09 & 1.429E-09 & 1.293E-09 & 1.271E-09 & 1.268E-09 & 1.263E-09\\
	11 & 2.742E-08 & 2.813E-08 & 2.778E-08 & 2.797E-08 & 2.788E-08 & 2.785E-08\\
	13 & 1.182E-07 & 1.128E-07 & 9.505E-08 & 9.574E-08 & 9.694E-08 & 9.578E-08\\

	\hline
\end{tabular}

\subsection{Results for kernel truncation test}
\begin{tabular}{|*{7}{c|}}
	\hline
	\diagbox{K}{Lattice} & $2^{0}$ & $2^{1}$ & $2^{2}$ & $2^{3}$ & $2^{4}$ & $2^{5}$\\
	\hline
	8 & 4.243E+08 & 2.439E+08 & 1.219E+08 & 6.097E+07 & 3.048E+07 & 1.496E+07\\
	16 & 1.115E+08 & 6.410E+07 & 3.205E+07 & 1.603E+07 & 8.013E+06 & 4.044E+06\\
	32 & 2.387E+05 & 1.372E+05 & 6.860E+04 & 3.430E+04 & 1.715E+04 & 8.643E+03\\
	64 & 1.490E+00 & 1.490E+00 & 7.560E+00 & 3.035E+00 & 7.726E-01 & 3.318E-02\\
	128 & 1.490E+00 & 1.490E+00 & 7.560E+00 & 3.035E+00 & 7.726E-01 & 3.318E-02\\
	256 & 1.490E+00 & 1.490E+00 & 7.560E+00 & 3.035E+00 & 7.726E-01 & 3.318E-02\\
	512 & 1.490E+00 & 1.490E+00 & 7.560E+00 & 3.035E+00 & 7.726E-01 & 3.318E-02\\
	\hline
	\diagbox{K}{Lattice} & $2^{6}$ & $2^{7}$ & $2^{8}$ & $2^{9}$ & $2^{10}$ & $2^{11}$\\
	\hline
	8 & 7.660E+06 & 3.513E+06 & 1.832E+06 & 1.252E+06 & 1.114E+06 & 1.059E+06\\
	16 & 1.826E+06 & 9.412E+05 & 3.674E+05 & 1.458E+05 & 8.894E+04 & 5.939E+04\\
	32 & 4.547E+03 & 2.243E+03 & 1.198E+03 & 7.293E+02 & 1.749E+02 & 4.789E+01\\
	64 & 3.135E-01 & 6.133E-02 & 4.157E-02 & 4.981E-03 & 5.486E-03 & 5.939E-07\\
	128 & 3.135E-01 & 6.133E-02 & 4.157E-02 & 4.981E-03 & 5.486E-03 & 5.855E-07\\
	256 & 3.135E-01 & 6.133E-02 & 4.157E-02 & 4.981E-03 & 5.486E-03 & 5.855E-07\\
	512 & 3.135E-01 & 6.133E-02 & 4.157E-02 & 4.981E-03 & 5.486E-03 & 5.855E-07\\
	\hline
	\diagbox{K}{Lattice} & $2^{12}$ & $2^{13}$ & $2^{14}$ & $2^{15}$ & $2^{16}$ & $2^{17}$\\
	\hline
	8 & 1.046E+06 & 1.041E+06 & 1.038E+06 & 1.038E+06 & 1.037E+06 & 1.037E+06\\
	16 & 5.536E+04 & 5.430E+04 & 5.287E+04 & 5.269E+04 & 5.266E+04 & 5.265E+04\\
	32 & 1.520E+01 & 1.346E+01 & 1.049E+01 & 9.260E+00 & 9.210E+00 & 9.180E+00\\
	64 & 1.235E-09 & 1.316E-09 & 1.211E-09 & 1.246E-09 & 1.247E-09 & 1.243E-09\\
	128 & 1.470E-09 & 1.429E-09 & 1.293E-09 & 1.271E-09 & 1.268E-09 & 1.263E-09\\
	256 & 1.470E-09 & 1.429E-09 & 1.293E-09 & 1.271E-09 & 1.268E-09 & 1.263E-09\\
	512 & 1.470E-09 & 1.429E-09 & 1.293E-09 & 1.271E-09 & 1.268E-09 & 1.263E-09\\
	\hline
\end{tabular}

\end{document}